\crefname{enumi}{}{}
\crefname{equation}{}{}
\newtheorem{theorem}{Theorem}
\newenvironment{delayedproof}[1]
 {\begin{proof}[\raisedtarget{#1}Proof of \Cref{#1}]}
 {\end{proof}}
\newcommand{\raisedtarget}[1]{%
  \raisebox{\fontcharht\font`P}[0pt][0pt]{\hypertarget{#1}{}}%
}
\newtheorem{proposition}{Proposition}[section]
\newtheorem{lemma}[proposition]{Lemma}
\theoremstyle{definition}
\newtheorem{definition}[proposition]{Definition}
\newtheorem{remark}[proposition]{Remark}
\numberwithin{equation}{section}
\newcommand{\nocontentsline}[3]{}
\newcommand{\tocless}[2]{\bgroup\let\addcontentsline=\nocontentsline#1{#2}\egroup}
\def \R {\mathbb{R}}
\def \rmH {\mathrm{H}}
\def \rmd {\mathrm{d}}
\def \L {\mathscr{L}}
\def \K {\mathscr{K}}
\def \rmL {\mathrm{L}}
\def \calU {\mathcal{U}}
\def \calV {\mathcal{V}}
\def \calA {\mathcal{A}}
\def \bfB {\mathbf{B}}
\def \bfC {\mathbf{C}}
\def \rmV {\mathrm{V}}
\def \rmW {\mathrm{W}}
\DeclarePairedDelimiter\abs{\lvert}{\rvert}
\DeclarePairedDelimiter\norm{\lVert}{\rVert}
\DeclarePairedDelimiter{\pair}{\langle}{\rangle}
\newcommand{\lefteqno}{\let\veqno\@@leqno}
\renewcommand{\eqref}[1]{\textup{(\ignorespaces\ref{#1}\unskip\@@italiccorr)}}
\begin{document}


\title[Boundedness of weak solutions to degenerate Kolmogorov equations]{Boundedness of weak solutions to degenerate Kolmogorov equations of hypoelliptic type in bounded domains}

\author{Mingyi Hou}
\address{Mingyi Hou\\Department of Mathematics, Uppsala University\\
751 05 Uppsala, Sweden}
\email{mingyi.hou@math.uu.se}

\date{\today}

\begin{abstract}
    We establish the boundedness of weak subsolutions for a class of degenerate Kolmogorov equations of the hypoelliptic type, compatible with a homogeneous Lie group structure, within bounded product domains using the De Giorgi iteration. We employ the renormalization formula to handle boundary values and provide energy estimates. An $\rmL^1$--$\rmL^p$ type embedding estimate derived from the fundamental solution is utilized to incorporate lower-order divergence terms. This work naturally extends the boundedness theory for uniformly parabolic equations, with matching exponents for the coefficients.
\end{abstract}

\subjclass[2020]{35K70, 35Q84, 35K65, 35B45, 35B09; 35B65.}
\keywords{Kolmogorov equation, hypoelliptic, ultraparabolic, Fokker--Planck, weak solution, boundedness, regularity, Sobolev embedding.}
\maketitle

\tableofcontents

\section{Introduction and the main result}
Let $N \geq 2$ and $0 < m_0 \leq N$ be positive integers, and let $z=(x,t)\in\R^N\times\R$ denote a point. The (backward) Fokker-Planck-Kolmogorov equation of divergence form, which is degenerate if $m_0 < N$, for a function $u(x,t)$, is given by: 
\begin{equation}\label{eq:kol}
    D_t u - \langle \mathbf{B}x, Du \rangle = \L u + g + D_i f^i,
\end{equation}
where, unless otherwise stated, the summation for indices $i,j$ is from $1$ to $m_0$. 
Here, $\bfB$ is a constant real matrix, $D = (D_1, \dots, D_N)$ is the gradient, and $\langle\cdot,\cdot\rangle$ is the Euclidean inner product. 
The functions $g(x,t)$ and $f^i(x,t)$ are measurable. 
The operator $\L$ is defined as:  
\begin{equation*}
    \L u =  D_i \left( a^{ij}(x,t) D_j u + b^i(x,t) u \right) +  c^i(x,t) D_i u + d(x,t) u
\end{equation*}
where $a^{ij}, b^i, c, d$ (for $i,j=1,\dots,m_0$) are measurable functions, and $a^{ij} = a^{ji}$. We may also denote the Kolmogorov operator by $ \K := \mathscr{T} - \L $, where $\mathscr{T} = D_t - \langle \bfB x, D \rangle$ is the transport part.

Throughout this paper, we assume the following for some positive constants $\lambda$ and $\Lambda$:
\begin{equation}\label{assump:elliptic}
    \lefteqno
    \tag{\textrm{H1}}
    \abs{a^{ij}(x,t)} \leq \Lambda  \textrm{ and } a^{ij}(x,t) \xi_i \xi_j \geq \lambda \abs{\xi}^2, \quad\forall(x,t)\in\R^{N+1},\, \xi\in\R^{N}.
\end{equation}
Additionally, we assume that the matrix $\bfB$ has the following form: 
\begin{equation}\label{assump:B}
    \lefteqno
    \tag{\textrm{H2}}
    \bfB = 
    \begin{pmatrix}
        \mathbf{O} &\mathbf{O} & \cdots & \mathbf{O} & \mathbf{O} \\
        \bfB_1 & \mathbf{O} & \cdots & \mathbf{O} & \mathbf{O} \\
        \mathbf{O} & \bfB_2 & \cdots & \mathbf{O} & \mathbf{O} \\
        \vdots & \vdots & \ddots & \vdots & \vdots \\
        \mathbf{O} & \mathbf{O} & \cdots & \bfB_{\kappa} & \mathbf{O}
    \end{pmatrix}
\end{equation}
where $\mathbf{O}$ is the zero matrix and each $\bfB_j$ is a $m_j \times m_{j-1}$ matrix of rank $m_j$, with $j=1,\dots,\kappa$ and $m_j$ being positive integers such that:
\begin{equation*}
    m_0 \geq m_1 \geq \dots \geq m_\kappa \geq 1, \textrm{ and } m_0 + m_1 + \cdots + m_\kappa = N. 
\end{equation*}
The homogeneous dimension is defined as: 
\begin{equation*}
    Q := m_0 + 3m_1 + \cdots + (2\kappa + 1)m_\kappa.
\end{equation*}

To present our main result, we introduce the necessary function space preliminaries. 
Throughout this paper, we consider $\Omega = \calV \times \calU$, where $\calV \subset \R^{m_0}$ and $\calU \subset \R^{N-m_0}$ are bounded domains, with $\partial\calV$ being $\mathrm{C}^{0,1}$ and $\partial\calU$ being $\mathrm{C}^{1,1}$.
We assume $N \geq 2$, or equivalently $Q \geq 2$. 
The time cylinder is defined as $\Omega_T := \Omega \times (0,T)$. 

We define the function space $\rmH^1_{\mathrm{kin}}(\Omega_T)$ as follows: 
\begin{equation*}
    \rmH^1_{\mathrm{kin}}(\Omega_T) := \{ u(x,t) \in \rmL^2(\calU_T; \rmH^1(\calV)) \textrm{ such that } \langle \bfB x, Du \rangle \in \rmL^2(\calU_T; \rmH^{-1}(\calV)) \},
\end{equation*}
where $\calU_T := \calU \times (0,T)$. 
This space is equipped with the norm 
\begin{equation*}
    \norm{u}_{\rmH^1_{\mathrm{kin}}}^2 = \norm{u}^2_{\rmL^2(\calU_T; \rmH^1(\calV))} + \norm{\langle \bfB x, Du \rangle}_{\rmL^2(\calU_T; \rmH^{-1}(\calV))}^2,
\end{equation*}
where $\rmH^{-1}(\calV)$ is the dual space of $\rmH^1_0(\calV)$. 

Next, we define the space $\rmV^0_{\mathrm{kin}}(\Omega_T)$ as  
\begin{equation*}
    \rmV^0_{\mathrm{kin}}(\Omega_T) := \rmH^1_{\mathrm{kin}}(\Omega_T) \cap \mathrm{C}([0,T]; \rmL^2(\Omega))
\end{equation*}
equipped with the norm 
\begin{equation*}
    \norm{u}_{\rmV^0_{\mathrm{kin}}(\Omega_T)} := \norm{u}_{\rmH^1_{\mathrm{kin}}(\Omega_T)} + \sup_{0<t<T} \norm{u(\cdot, t)}_{\rmL^2(\Omega)}.
\end{equation*}

{ The weak trace was initially observed in \cite{Sil22} and later formally introduced in \cite{AH24}.}
\begin{equation*}
    \mathrm{tr}_{\Gamma_{\mathrm{K}}} : \rmV^0_{\mathrm{kin}}(\Omega_T) \to \rmL^2_{\mathrm{loc}}(\Gamma_{\mathrm{K}}, \abs{\langle \bfB x, \mathbf{n}_x \rangle}^2),
\end{equation*}
{  $\Gamma_{\mathrm{K}} := \calV \times (\partial\calU) \times (0,T)$, 
$\mathbf{n}_x$ denotes the outward unit normal for $\Omega$, and $\abs{\pair{\bfB x,\mathbf{n}_x}}^2$ is the weight. It is important to note that for a classical trace operator we expect the weight to be $\abs{\pair{\bfB x, \mathbf{n}_x}}$, i.e.~without the square. However, it is still an open problem if such a trace exists. The difference between two traces is that, for the weak trace, there is not an integration by parts formula for two functions belonging to $\rmH_{\mathrm{kin}}^1(\Omega_T)$. Thus, directly testing the equation against its own solution is not feasible. Nevertheless, using the weak trace we can test the equation against nice test functions to get a renormalization formula, \cref{thm:renormal}, which serves as a substitute for classical energy estimates.}
Throughout this paper, we denote $u|_{\Gamma_{\mathrm{K}}} = \mathrm{tr}_{\Gamma_{\mathrm{K}}}(u)$, implying that boundary values are understood in the weak trace sense.

\begin{definition}
    A function $u \in \rmV^0_{\mathrm{kin}}(\Omega_T)$ is called a subsolution to \eqref{eq:kol} if for all $v \in \mathrm{C}_c^1(\Omega_T)$ with $v \geq 0$, the inequality 
    \begin{equation*}
        \iint_{\Omega_T} \left(-u v_t + u \langle \bfB x, Dv \rangle + \L[u,v] - gv + f^i D_i v \right) \,\mathrm{d}x \mathrm{d}t \leq 0
    \end{equation*}
    holds, where 
    \begin{equation*}
        \L[u,v] := a^{ij} D_j u D_i v + b^i u D_i v - c^i (D_i u) v - duv.
    \end{equation*}
\end{definition}

Now, we state the main result. 
We further separate the boundary of $\Omega_T$ into parts: 
\begin{align*}
    &\Gamma_{\mathrm{P}} := (\partial\calV \times \calU \times [0,T]) \cup (\Omega \times \{0\}),\\
    &\Gamma^+_{\mathrm{K}} := \{(x, t) \in \Gamma_{\mathrm{K}}: \langle \bfB x, \mathbf{n}_x \rangle \geq 0\},\\
    &\Gamma_{\mathrm{K}}^-:= \{(x,t)\in\Gamma_\mathrm{K}: \pair*{\bfB x, \mathbf{n}_x}<0\}.
\end{align*}
Then we define 
\begin{equation*}
    M := \sup_{\Gamma_\mathrm{K}^+\cup \Gamma_\mathrm{P}} u_+,
\end{equation*}
where $u_+ := \max\{u, 0\}$ denotes the positive part of a function. 
Define
\begin{equation*}
    p_0 := \frac{Q+2}{Q} \textrm{ and } q_0 := \frac{Q+2}{2}
\end{equation*}
the optimal embedding constants.
Assume $\tilde{q} > q_0$ is fixed, and consider either:
\begin{equation}
    \lefteqno
    \tag{\textrm{Data 1}}
    \label{eq:data1}
    c^i \in \rmL^{2\tilde{q}}(\Omega_T),\, d, g \in \rmL^{\tilde{q}}(\Omega_T),\, b^i, f^i \in \rmL^{2\tilde{q}}(\Omega_T);
\end{equation}
or
\begin{equation}
    \lefteqno
    \tag{\textrm{Data 2}}
    \label{eq:data2}
    c^i \in \rmL^{Q+2}(\Omega_T),\, d, g \in \rmL^{\tilde{q}}(\Omega_T),\, b^i, f^i = 0. 
\end{equation}
Moreover, $\norm{\cdot}_{p,\Omega_T}$ denotes the $\rmL^p(\Omega_T)$ norm.

\begin{theorem}\label{thm:1}
    Let $\Omega = \calV \times \calU\subset\R^{m_0}\times\R^{N-m_0}$ be a bounded product domain with $\partial\calV$ being $\mathrm{C}^{0,1}$ and $\partial\calU$ being $\mathrm{C}^{1,1}$, and let the assumptions \cref{assump:elliptic,assump:B} hold. 
    If $u\in\rmV^0_{\mathrm{kin}}(\Omega_T)$ is a subsolution to \cref{eq:kol}, and either \cref{eq:data1} or \cref{eq:data2} holds, 
    then there exists a constant $C = C (\lambda, \Lambda, \mathrm{Data})$ such that 
    \begin{equation}\label{eq:linfl2}
        \sup_{\Gamma_{\mathrm{K}}^-} u,\, \sup_{\Omega_T} u \leq M + C \max\{1, M,\norm{(u-M)_+}_{2,\Omega_T}\},
    \end{equation}
    and 
    \begin{equation}\label{eq:linfdata}
        \sup_{\Gamma_{\mathrm{K}}^-} u,\, \sup_{\Omega_T} u \leq (1+ C) \max\{1,M\}. 
    \end{equation}
\end{theorem}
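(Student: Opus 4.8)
The plan is to run a De Giorgi iteration on the truncations $u_k := (u - M(1+2^{-k}(C_0-1)))_+$ (or more simply on levels $L_k \nearrow$ a final level to be determined), exploiting two ingredients that the paper has set up: the renormalization formula \cref{thm:renormal}, which replaces the unavailable classical energy identity and in particular controls the boundary contribution on $\Gamma_{\mathrm K}^-$ by data on $\Gamma_{\mathrm K}^+ \cup \Gamma_{\mathrm P}$ plus the bulk, and the $\rmL^1$–$\rmL^{p_0}$ (respectively $\rmL^2$–$\rmL^{2q_0}$) embedding for $\rmV^0_{\mathrm{kin}}$ derived from the fundamental solution. First I would fix the final target level $\kappa_\infty := M + C\max\{1,M,\norm{(u-M)_+}_{2,\Omega_T}\}$ with $C$ large to be chosen, set $\kappa_k := M + \kappa(1 - 2^{-k})$ where $\kappa := \kappa_\infty - M$, and write $A_k$ for the superlevel set $\{u > \kappa_k\}$ with $|A_k|$ its space–time measure and $a_k := \iint_{A_k}(u-\kappa_k)_+^2$.

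The energy step: applying the renormalization formula to the convex function $\phi(s) = \tfrac12 (s-\kappa_k)_+^2$, since $u$ is a subsolution and $\kappa_k \geq M \geq u$ on $\Gamma_{\mathrm K}^+ \cup \Gamma_{\mathrm P}$, the good boundary terms on $\Gamma_{\mathrm K}^+$ have a favorable sign and the initial datum vanishes, so one obtains a bound of the form
\begin{equation*}
    \norm{(u-\kappa_k)_+}_{\rmV^0_{\mathrm{kin}}}^2 \lesssim \iint_{A_k}\Bigl( g(u-\kappa_k)_+ + |f||D(u-\kappa_k)_+| + \text{(lower-order terms in } b,c,d)\Bigr),
\end{equation*}
where the left side includes $\sup_t \norm{(u-\kappa_k)_+(\cdot,t)}_{\rmL^2(\Omega)}^2$, the Dirichlet energy $\norm{D(u-\kappa_k)_+}_{2}^2$, and also a nonnegative boundary piece on $\Gamma_{\mathrm K}^-$ (weighted by $|\pair{\bfB x,\mathbf n_x}|^2$) which is what eventually yields the $\sup_{\Gamma_{\mathrm K}^-}u$ bound. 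The quadratic term from $a^{ij}$ on the left is absorbed by ellipticity \cref{assump:elliptic}; the lower-order and forcing terms on the right are estimated by Hölder using \cref{eq:data1} or \cref{eq:data2}, with the $\rmL^{\tilde q}$/$\rmL^{2\tilde q}$ exponents chosen precisely so that, after invoking the embedding $\norm{\cdot}_{2q_0}$ with $q_0 = (Q+2)/2 < \tilde q$, every term is controlled by $\varepsilon\norm{(u-\kappa_k)_+}_{\rmV^0_{\mathrm{kin}}}^2 + C_\varepsilon(\cdots)|A_k|^{\gamma}a_k$ for some $\gamma > 0$; the borderline case \cref{eq:data2} with $c \in \rmL^{Q+2}$ uses the sharp embedding with no room to spare, which is why the exponents "match" the parabolic theory.

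The iteration step: combine the energy bound with the embedding inequality $\norm{v}_{2q_0,\Omega_T} \lesssim \norm{v}_{\rmV^0_{\mathrm{kin}}}$ applied to $v = (u-\kappa_{k+1})_+$, and use that on $A_{k+1}$ one has $u - \kappa_k > \kappa_{k+1}-\kappa_k = \kappa 2^{-(k+2)}$ to convert measure into $a_{k+1}$: by Hölder,
\begin{equation*}
    a_{k+1} = \iint_{A_{k+1}}(u-\kappa_{k+1})_+^2 \leq \Bigl(\iint_{A_{k+1}}(u-\kappa_{k+1})_+^{2q_0}\Bigr)^{1/q_0}|A_{k+1}|^{1-1/q_0},
    \quad |A_{k+1}| \leq (\kappa 2^{-(k+2)})^{-2} a_k.
\end{equation*}
Chaining these gives a recursion $a_{k+1} \leq C^k \kappa^{-\delta} a_k^{1+\beta}$ for explicit $\beta, \delta > 0$ (with an extra harmless $\max\{1,M\}$-type factor from the linear terms $b^i u$, $du$ — here one splits $u = (u-\kappa_k)_+ + \kappa_k$ and absorbs the $\kappa_k \lesssim \max\{1,M\}$ part, which is the source of the $\max\{1,M\}$ on the right of \cref{eq:linfl2}). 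The fast-geometric-convergence lemma then forces $a_k \to 0$ provided $a_0 = \norm{(u-M)_+}_{2,\Omega_T}^2$ is smaller than a threshold $\sim \kappa^{\delta/\beta}$; choosing $C$ (hence $\kappa$) large enough in terms of $\lambda,\Lambda,\mathrm{Data}$ and $\max\{1,M,\norm{(u-M)_+}_{2}\}$ makes this hold, yielding $u \leq \kappa_\infty$ a.e.\ in $\Omega_T$ and, from the $\Gamma_{\mathrm K}^-$ boundary piece, the same bound there — this is \cref{eq:linfl2}. Finally \cref{eq:linfdata} follows from \cref{eq:linfl2} by a standard absorption: if $\norm{(u-M)_+}_{2,\Omega_T} \leq \max\{1,M\}$ we are done immediately; otherwise one first runs the argument on a subcylinder / uses interpolation to trade the $\rmL^2$ norm of $(u-M)_+$ against a small multiple of $\sup u$ plus $\max\{1,M\}$, then absorbs.

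The main obstacle I anticipate is the energy step in the borderline scaling-critical case \cref{eq:data2}: with $c^i$ only in $\rmL^{Q+2}$ the term $\iint_{A_k} c^i D_i(u-\kappa_k)_+\,(u-\kappa_k)_+$ sits exactly at the endpoint of the embedding, so one cannot afford any loss — it must be handled by Hölder with the triple of exponents $(Q+2, 2q_0, \text{measure})$ and the gain comes solely from the factor $|A_k|^{1/(Q+2)}$, which shrinks along the iteration; making this interact correctly with the renormalization formula (which only gives testing against smooth $v$, so $\phi$ must be regularized and the boundary weight $|\pair{\bfB x,\mathbf n_x}|^2$ carefully tracked) rather than a naive energy identity is the technical heart of the proof.
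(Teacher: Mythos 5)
Your proposal has the correct high-level structure (renormalization as an energy substitute, De Giorgi on superlevel sets, propagation on subcylinders), but the central analytic step is wrong, and the paper goes out of its way to tell the reader why. You invoke an embedding of the form $\norm{v}_{2q_0,\Omega_T} \lesssim \norm{v}_{\rmV^0_{\mathrm{kin}}}$ (and later implicitly $\norm{v}_{2p_0}\lesssim\norm{v}_{\rmV^0_{\mathrm{kin}}}$) as if it were available. No such Sobolev-type embedding for $\rmV^0_{\mathrm{kin}}$ is known — the hypoelliptic gain of regularity in the degenerate directions is not captured by the norm of $\rmH^1_{\mathrm{kin}}$ alone, and the paper states this explicitly in \Cref{sec:proof:data2}: ``there is no known Sobolev embedding theorems from the function space $\rmV^0_{\mathrm{kin}}(\Omega_T)$ itself.'' The gain of integrability is instead extracted from the \emph{equation}: one writes, via Riesz representation, $\K_0(\Phi_{k,l}(u)) = D_i F^i + G - \mu$ for a nonnegative measure $\mu$, so that $\Phi_{k,l}(u) \le (D_iK)*F^i + K*G$, and then applies \Cref{thm:embedding:l1} (under \ref{eq:data1}) or \Cref{thm:embedding:l2} (under \ref{eq:data2}) to the fundamental solution $K$ of the constant-coefficient operator. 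Without this Duhamel-style representation your iteration step has nothing to run on.

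Two further issues. First, the test function $\phi(s)=\tfrac12(s-\kappa_k)_+^2$ is not admissible in \Cref{thm:renormal}: the hypothesis requires $\Phi'\in\rmW^{1,\infty}(\R)$, but $\phi'(s)=(s-\kappa_k)_+$ is unbounded. The paper's truncations $\Phi_{k,l}$ (with $\Phi_{k,l}'=\Psi_{k,l}$ capped at $2(l-k)$) are exactly what make the renormalization applicable, and the passage $l\to\infty$ has to be justified a posteriori (\Cref{thm:l2p0embedding}, \Cref{thm:l2p0embedding1}) once bounds independent of $l$ are in hand. This is not cosmetic: you need the $\rmL^{2p_0}$ integrability of the truncations \emph{before} you may let $l\to\infty$, and without the truncation the $b^i,d,f^i,g$ terms in the renormalization formula involve $u\Phi'(u)$, which is uncontrolled. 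Second, the $\sup_{\Gamma_{\mathrm K}^-}u$ bound does not fall out of a ``nonnegative boundary piece'' in the energy estimate — the renormalization formula's boundary integral carries the degenerate weight $\pair{\bfB x,\mathbf n_x}$, which vanishes on the grazing set and so cannot control the trace pointwise. The paper instead obtains the boundary bound from \Cref{thm:bdy}, a separate consequence of the convolution–translation construction of the weak trace: a bounded $\rmV^0_{\mathrm{kin}}$ function has its weak trace bounded by the same constant. Your sketch also misstates the target exponent: the embedding gain is $2p_0=2(Q+2)/Q$, not $2q_0=Q+2$.
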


\begin{remark}
    The first inequality provides an $\rmL^2$--$\rmL^\infty$ estimate which can be made local and is crucial for the Harnack inequality. 
    The renormalization formula, { \cref{thm:renormal}}, and an $\rmL^1$--$\rmL^{p_0}$ type embedding estimate, { \cref{thm:embedding:l1}}, make it possible to include divergence terms $b^i$ and $f^i$.
    The second inequality is a classical boundedness estimate up to the boundary. 
    Therefore the current theory is a canonical extension of the classical boundedness theory for uniformly parabolic equations.  
\end{remark}

\begin{remark}
    Consider the case $m_0 = N \geq 2$, the Kolmogorov equation \cref{eq:kol} becomes the uniformly parabolic equation. 
    In this case, $Q = N$, and under \cref{eq:data2}, the optimal exponent $\frac{N+2}{2}$ from the boundedness theory for uniformly parabolic equations is recovered, see \cite{LSU68, DiB93}. 
    Under \cref{eq:data1}, the exponent is still optimal for the terms $b^i, d, f^i, g$, but we cannot reach $N+2$ for the coefficient $c^i$ due to the $\rmL^1$--$\rmL^{p_0}$ embedding, see \cref{thm:embedding:l1}. 
    Therefore, we believe our result is optimal for the degenerate Kolmogorov equations of hypoelliptic type. 
\end{remark}

An immediate corollary is the weak maximum principle.
\begin{theorem}\label{thm:2}
    Let $\Omega$ be defined as in \cref{thm:1}, and let the assumptions \cref{assump:elliptic,assump:B} hold. 
    Assume that $b^i, c^i\in \rmL^{Q+2}(\Omega_T)$, $d\in\rmL^{2}(\Omega_T)$, $g,f^i=0$, and for all $v\in \mathrm{C}_c^1(\overline{\Omega_T})$ with $v\geq0$ and $v=0$ on $(\partial\calV)\times\calU\times(0,T)$ that
    \begin{equation*}
        \int_{\Omega_T} dv - b^i D_i v\,\rmd z \leq 0.
    \end{equation*}
    If $u\in\rmV^0_{\mathrm{kin}}(\Omega_T)$ is a subsolution to \cref{eq:kol}, then 
    \begin{equation*}
        \sup_{\Gamma_{\mathrm{K}}^-} u,\, \sup_{\Omega_T} u \leq M. 
    \end{equation*}
\end{theorem}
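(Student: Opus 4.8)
The plan is to derive \Cref{thm:2} as a direct consequence of \Cref{thm:1} by choosing the zeroth-order data appropriately so that the constant $C$ in the right-hand side of \eqref{eq:linfl2} can be absorbed or eliminated. The key observation is that the bound \eqref{eq:linfl2} has the form $M + C\max\{1,M,\norm{(u-M)_+}_{2,\Omega_T}\}$, which is not yet a clean maximum principle because of the additive $C\max\{1,M\}$ term. To get rid of it, I would exploit the freedom in the hypotheses of \Cref{thm:2}: we are given $g = f^i = 0$, so the inhomogeneous source terms vanish, and we are given the sign condition $\int_{\Omega_T} dv - b^i D_i v\,\rmd z \le 0$ for admissible test functions $v\ge 0$ vanishing on the lateral parabolic boundary. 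The strategy is to apply the subsolution inequality not to $u$ directly but to a truncation, and use the sign condition on $(d,b)$ to kill the zeroth-order contributions.

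First I would reduce to the case $M = 0$ up to working with $v := (u-M)_+$-type quantities: more precisely, I would test the subsolution inequality for $u$ with $v$ a nonnegative $\mathrm{C}^1_c$-approximation of $(u-M)_+$ (or its powers), so that $v$ vanishes wherever $u \le M$, in particular near $\Gamma_{\mathrm K}^+ \cup \Gamma_{\mathrm P}$. The energy/renormalization machinery behind \Cref{thm:1} (i.e.\ \cref{thm:renormal}) then produces, for $w := (u-M)_+$, an inequality of the shape
\begin{equation*}
    \sup_{0<t<T}\norm{w(\cdot,t)}_{2,\Omega}^2 + \norm{Dw}_{2,\Omega_T}^2 \lesssim \iint_{\Omega_T}\bigl(- d\,w^2 + b^i D_i(w^2/2) + \text{lower order from }c\bigr)\,\rmd z + (\text{boundary terms on }\Gamma_{\mathrm K}^-).
\end{equation*}
Now $w^2 \ge 0$ and $w^2 \in \mathrm{H}^1$ with $w^2 = 0$ on the relevant boundary pieces, so the sign hypothesis $\int dv - b^i D_i v \le 0$ applied with $v = w^2$ (suitably mollified) shows the $d$- and $b$-contributions are $\le 0$. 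This is the crucial point where \Cref{thm:2}'s extra assumption does real work compared with \Cref{thm:1}.

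Next I would handle the remaining first-order term coming from $c^i \in \rmL^{Q+2}(\Omega_T)$. Since $g = f^i = b^i$-effects are gone and only $c^i D_i u$ survives, I would run the same De Giorgi iteration as in \Cref{thm:1} but now over level sets $\{u > M + k\}$ for $k \ge 0$ with \emph{no} forcing data: the iteration inequality for $A_k := \norm{(u - M - k)_+}$ on super-level sets becomes homogeneous, i.e.\ of the form $A_{k'} \le C(k'-k)^{-\alpha}\abs{\{u > M+k\}}^{1+\epsilon}A_k^{\beta}$ with $\beta > 1$, whose fast-geometric-convergence lemma forces $A_k = 0$ for $k$ beyond a threshold determined solely by $A_0 = \norm{(u-M)_+}_{2,\Omega_T}$. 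Combined with the step above, $\norm{(u-M)_+}_{2,\Omega_T}$ is itself controlled — and in the homogeneous setting, by absorbing, it must be zero, or at worst the iteration closes to give $\sup_{\Omega_T} u \le M$ directly. The same argument on the outflow boundary, using the weak trace in the renormalization formula, yields $\sup_{\Gamma_{\mathrm K}^-} u \le M$.

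The main obstacle I anticipate is making the sign condition on $(d,b)$ bite at the level of the quadratic test function $v = w^2$: the hypothesis in \Cref{thm:2} is stated for $v \in \mathrm{C}^1_c(\overline{\Omega_T})$, whereas $w^2$ is only in $\rmH^1_{\mathrm{kin}}$ with a weak trace vanishing on $\Gamma_{\mathrm P}$, so a careful density/approximation argument — approximating $w^2$ by nonnegative $\mathrm{C}^1_c$ functions vanishing on $(\partial\calV)\times\calU\times(0,T)$ and passing to the limit in $\int dv - b^i D_i v\,\rmd z \le 0$ — is needed, and this must be compatible with the low regularity ($d \in \rmL^2$, $b^i \in \rmL^{Q+2}$) assumed. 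A secondary technical point is bookkeeping the $\Gamma_{\mathrm K}^-$ boundary terms through the renormalization formula so that they contribute with the right (favorable) sign; this is exactly the role of \cref{thm:renormal} emphasized in the introduction, and I would lean on it rather than attempt a direct integration by parts, which is unavailable in $\rmH^1_{\mathrm{kin}}$.
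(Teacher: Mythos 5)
Your high-level strategy — use the sign condition on $(d, b^i)$ to eliminate the zeroth-order and divergence contributions, then run a forcing-free De Giorgi iteration to force $(u-M)_+ \equiv 0$ — is the same as the paper's. However, the specific test function you propose for the sign hypothesis is wrong, and this is not merely a density issue as you suggest but an algebraic mismatch that breaks the argument. Plugging $\Phi(r) = \tfrac{1}{2}(r-M)_+^2$ and $v\equiv 1$ into the renormalization formula \cref{eq:renormal} and writing $w := (u-M)_+$, the $d$- and $b$-terms that appear on the ``bad'' side are $\iint(du\Phi'(u) - b^i u D_i u\, \Phi''(u)) = \iint(d\,uw - b^i u D_i w)$, \emph{not} $\iint(-d w^2 + b^i D_i(w^2/2))$; you have the sign reversed, and more importantly the correct combination is $\iint(d\cdot uw - b^i D_i(uw))$ after the regrouping $b^i u \Phi' D_i v + b^i u D_iu\, \Phi'' v - du\Phi'v = b^i D_i(u\Phi'v) - b^i(D_i\Phi)v - du\Phi'v$ performed in the paper's \proofref{thm:2}. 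The sign hypothesis must therefore be applied with the test function $u\Phi'(u)v$ (here $uw = w^2 + Mw$), not with $w^2$. Testing with $v = w^2$ only yields $\iint dw^2 \leq \iint b^i D_i w^2 = 2\iint b^i w D_i w$, whereas what the energy inequality actually needs controlled is $\iint d\,uw - b^i u D_i w$; the residual $\iint b^i w D_i w$ has no sign, so your claimed cancellation of the $d$- and $b$-contributions does not hold.

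The paper avoids this entirely by choosing the \emph{linear} truncation $\Phi(r) = (r-k)_+$ with $k\geq M$: then $u\Phi'(u)v = u\chi_{\{u>k\}}v \geq 0$ vanishes on $(\partial\calV)\times\calU\times(0,T)$, the sign condition kills $\iint d(u\Phi'v) - b^i D_i(u\Phi'v)$, and after dropping the non-negative $a^{ij}D_ju D_i u\,\Phi'' v$ one is left with the clean subsolution inequality
\begin{equation*}
    \int_{\Omega_T} -u_k v_t + u_k \pair*{\bfB x, Dv} + a^{ij}D_j u_k D_i v - (b^i + c^i)(D_i u_k)v\,\rmd z \leq 0.
\end{equation*}
Note that the residual $-b^i(D_i\Phi)v$ is \emph{not} killed by the sign condition but is instead absorbed as an extra drift coefficient $b^i + c^i \in \rmL^{Q+2}$. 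One then re-runs the Data 2 iteration of \cref{sec:proof:data2} for this reduced operator: since its $d, g, b, f$ data vanish, the right-hand side of \cref{eq:it2:iteration} is zero, giving $u_k = 0$, hence $u\leq k$ for all $k\geq M$. You should also note that this is not a ``direct consequence'' of \cref{thm:1} as you frame it, since the constant $C$ there is not zero; one must go back into the machinery and verify the iteration closes with zero forcing. Finally, your observation that testing the sign hypothesis requires approximating $\rmH^1$ functions by admissible smooth ones is a legitimate technical point, but it is secondary to the exponent/algebra issue above.
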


\subsection{A brief survey of the question}
Kolmogorov equations of hypoelliptic type, also known as ultraparabolic equations, have broad applications in various fields, for instance, statistical physics and mathematical finance.
It has been extensively studied and a comprehensive overview can be found in \cite{AP20}.

Recent attention has been directed towards the weak solution theory for Kolmogorov equations with rough coefficients.
Significant progress has been made:
The Harnack inequality for weak solutions to the kinetic Fokker-Planck was initially established in \cite{GIMV19}, followed by an alternative proof in \cite{GI23} and a quantitative Harnack inequality \cite{GM22}.
Additionally, \cite{AR22} generalizes the Harnack inequality to degenerate Kolmogorov equations. 
Meanwhile, the function space $\rmH^1_{\mathrm{kin}}$, which is suitable for a weak solution theory for the kinetic Fokker-Planck equation, was proposed in \cite{AAMN21} and stimulated further interest in the field. 
The existence of weak solutions has been well-established in various contexts, e.g.~in domains without boundaries \cite{AAMN21,AHN24,AIN24weak}, and in bounded domains \cite{LN21,Zhu22,GN23,AH24}.

However, proceeding further with the weak solution theory in bounded domains encounters challenges, notably the trace problem as highlighted in \cite{AAMN21}.
Specifically, the classical trace for the function space $\rmH^1_{\mathrm{kin}}(\Omega_T)$ remains open.
This issue has prompted consideration of a weaker notion of trace, first proposed for local boundary regularity in \cite{Sil22} and formally applied to the function space $\rmV^0_{\mathrm{kin}}(\Omega_T)$ in \cite{AH24}.

A crucial complement to the weak trace concept is the renormalization formula, e.g.~\cref{thm:renormal}, which plays a key role in handling energy estimates tested against the weak solution itself.
The concept of renormalization was introduced by DiPerna and Lions in \cite{DL89}, extended by Mischler to Vlasov and other kinetic equations in bounded domains in \cite{Mis00,Mis10}, and applied to weak solutions to kinetic Fokker-Planck equations in \cite{Zhu22}.
Recently, the renormalization formula for functions in the space $\rmV^0_{\mathrm{kin}}$ is introduced in \cite{AH24}. 

Regarding the boundedness of weak solutions, it is known in \cite{PP04} that weak solutions are locally bounded based on Sobolev embeddings derived from the fundamental solution from \cite{Fol75}. 
This concept has been further developed in subsequent works, such as \cite{WZ09,WZ11,GIMV19,AR22,WZ24}, to obtain local boundedness which is the key component for both Harnack inequality and H\"older regularity. 
On the other hand, in \cite{GM22}, a slightly different embedding result (still based on the fundamental solution), which is also used in \cite{IS20}, was developed. 
The current work is inspired by \cite{GM22}. 
Our approach integrates the recent advancements in weak trace theory, renormalization techniques, and an $\rmL^1$--$\rmL^{p_0}$ embedding estimate. 
Notably, the global boundedness up to the boundary for weak solutions to degenerate Kolmogorov equations presented in this work is novel to the best of our knowledge. 
Furthermore, it is noteworthy that the renormalization technique utilized in this study has previously proven effective in enhancing regularity theory, as demonstrated in the context of the 
nonlinear elliptic equations with general measure data, see e.g.~\cite{DMM99}.

\subsection{Sketch of the proof and outline}
The underlying idea for the proof of \cref{thm:1} is analogous to its counterpart for uniformly parabolic equations, see \cite{LSU68,DiB93}, and also similar to the proof of local $\rmL^2$--$\rmL^\infty$ estimates appeared in the literature mentioned above. 
Essentially, Sobolev embeddings enable higher integrability of weak solutions. 
By using the information of the equation, in particular, the Caccioppoli estimate, this gain of integrability can be further improved by either Moser or De Giorgi iteration. 
In this work, we will present the De Giorgi iteration, see e.g.~\cref{thm:it1}. 

Here's an outline of the proof structure:
In \cref{sec:intkernel}, we establish the optimal integrability of the fundamental solution defined in \cref{eq:kernel} to the principal equation \cref{eq:principal}; 
In \cref{sec:embedding}, we utilize the integrability of the kernel to prove an $\rmL^1$--$\rmL^{p_0}$ embedding result, see \cref{thm:embedding:l1}, alongside a known $\rmL^2$--$\rmL^{2p_0}$ embedding, see \cref{thm:embedding:l2};   
In \cref{sec:renormal}, we introduce the renormalization formula \cref{eq:renormal}. 
Based on the Sobolev type estimates and the renormalization formula we can proceed with the iteration,
and we first prove it under \cref{eq:data1}. 
In \cref{sec:truncate}, we construct truncations $\Psi_{k,l}(u)$ of the undercut of the subsolution, see \cref{eq:psi}, and show $\rmL^{2p_0}$ integrability; 
In \cref{sec:energy}, we derive uniform energy estimates for $\Psi_{k,l}(u)$ and show $(u-k)_+$ is $\rmL^{2p_0}$ integrable in small time intervals;
In \cref{sec:caccioppoli}, we derive a Cappioppoli estimate \cref{eq:it1:caccioppoli};
In \cref{sec:iteration}, by combining the Cappioppoli estimate and the Sobolev embedding we obtain iteration inequalities
of the form of De Giorgi, see \cref{eq:it1:iteration}; 
In \cref{sec:proof1,sec:proof2} we run the iteration and prove \cref{eq:linfl2,eq:linfdata}.
Concerning \cref{eq:data2}, we sketch the proof in \cref{sec:proof:data2} which is essentially the same as the proof for \cref{eq:data1} except the Sobolev embedding, see \cref{thm:embedding:data2}. 
Finally, the proof of \cref{thm:2} is sketched in \cref{sec:remarks} together with some closing remarks.

\section{Intergrability, embedding and renormalization}\label{sec:pre}
\subsection{Integrability of the Kolmogorov kernel}\label{sec:intkernel}
It is well-known (see \cite{AP20} for an \allowbreak overview) 
that the following constant coefficient equation has a fundamental solution:
\begin{equation}\label{eq:principal}
    u_t - \langle \bfB x, Du \rangle = \L_0 u, 
\end{equation}
where $\L_0 := D_{i}^2 u $
is the Laplace operator. 
We denote $\K_0 := \mathscr{T}-\L_0$.

Indeed, if we define the Lie group structure: $(x,t)\circ (y,s) := (y+ \mathbf{E}(s)x, t+s)$ for $(x,t),(y,s)\in\R^{N+1}$, where $\mathbf{E}(t) = e^{-t\bfB}$,
then the fundamental solution to \cref{eq:principal} is given by 
$K(x,t; y,s) = K((y,s)^{-1}\circ (x,t))$,
where 
\begin{equation}\label{eq:kernel}
    K(x,t) = \frac{C_N}{t^{Q / 2}} \exp\left(-\frac{1}{4} \pair*{\bfC^{-1}(1)\delta_N \left(\frac{1}{\sqrt{t}}\right) x, \delta_N\left(\frac{1}{\sqrt{t}}\right)x} \right),
\end{equation}
in which $C_N = (4\pi)^{-\frac{N}{2}} \left|\det \bfC(1)\right|^{-\frac{1}{2}}$,
\begin{equation*}
    \bfC(t):= \int_0^t \mathbf{E}(s) \mathbf{A}_0 \mathbf{E}^T (s)\,\mathrm{d}s,\quad \mathbf{A}_0 := 
    \begin{pmatrix}
        \mathbf{I}_{m_0} & \mathbf{O} \\
        \mathbf{O} & \mathbf{O}
    \end{pmatrix},
\end{equation*}
and 
\begin{equation*}
    \delta_N(r) = \mathrm{diag} (r\mathbf{I}_{m_0}, r^3 \mathbf{I}_{m1}, \cdots, r^{2\kappa +1 }\mathbf{I}_{m_\kappa} ) \textrm{ for all } r>0.
\end{equation*}

To derive the embedding results for weak solutions, we need to estimate the integrability of the kernel $K$ {  in $\R^N\times(0,T)$ for $T>0$ a fixed time}. 
The following estimates, inspired by \cite{GM22}, suit our needs best.  
\begin{lemma}[Integrablity of the fundamental solution]\label{thm:intkernel}
    Let $K$ be the fundamental solution of \cref{eq:principal} defined as in \cref{eq:kernel}. Let $T>0$ be any fixed time. 
    Then $K\in\rmL^{p}(\R^{N}\times(0,T))$ for all $1\leq p< p_0$ where 
    \begin{equation*}
        p_0 = \frac{Q+2}{Q},
    \end{equation*} 
    and 
    $D_i K \in \rmL^{p}(\R^{N}\times(0,T))$, $i=1,\dots, m_0$, for all $1\leq p < p_1$ where 
    \begin{equation*}
        p_1 = \frac{Q+2}{Q+1}.
    \end{equation*}
\end{lemma}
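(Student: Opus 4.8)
The plan is to compute the $\rmL^p$ norm of $K$ over $\R^N \times (0,T)$ by first integrating in the space variable $x$ for each fixed time $t$, then integrating the resulting function of $t$ over $(0,T)$. For fixed $t$, the Gaussian factor in $K$ has the form $\exp(-\tfrac14 \pair{\bfC^{-1}(1)\delta_N(t^{-1/2})x, \delta_N(t^{-1/2})x})$, so the natural move is the change of variables $y = \delta_N(t^{-1/2})x$. Since $\delta_N(r)$ is diagonal with entries $r, r^3, \dots, r^{2\kappa+1}$ repeated with multiplicities $m_0, m_1, \dots, m_\kappa$, its determinant is $r^{m_0 + 3m_1 + \cdots + (2\kappa+1)m_\kappa} = r^Q$. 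Hence $\rmd x = t^{Q/2}\,\rmd y$, and the spatial integral of $|K(x,t)|^p$ becomes $C_N^p\, t^{-pQ/2} \cdot t^{Q/2} \int_{\R^N} \exp(-\tfrac{p}{4}\pair{\bfC^{-1}(1)y,y})\,\rmd y = c\, t^{Q/2 - pQ/2}$, where $c$ is a finite constant depending on $N$, $p$, and the fixed positive-definite matrix $\bfC(1)$ (positive-definiteness of $\bfC(1)$ is the standard Kalman-type controllability condition guaranteed by the structure \eqref{assump:B}).

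Integrating $t^{Q/2 - pQ/2} = t^{\frac{Q}{2}(1-p)}$ over $(0,T)$ converges at $t=0$ precisely when $\frac{Q}{2}(1-p) > -1$, i.e.\ $p < 1 + \tfrac{2}{Q} = \tfrac{Q+2}{Q} = p_0$; at $t = T$ there is no issue since the exponent is bounded there. This gives $K \in \rmL^p(\R^N \times (0,T))$ for $1 \le p < p_0$. For the gradient estimate, note $D_i K(x,t)$ for $i \le m_0$ picks up an extra factor which, after the same substitution $y = \delta_N(t^{-1/2})x$, scales like $t^{-1/2}$ times a polynomial in $y$ against the Gaussian (the $i$-th column of $\delta_N(t^{-1/2})$ for $i \le m_0$ contributes the scaling $t^{-1/2}$, since those indices sit in the $r\mathbf{I}_{m_0}$ block). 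Thus $\int_{\R^N}|D_i K(x,t)|^p\,\rmd x = c'\, t^{-p/2} \cdot t^{Q/2 - pQ/2} = c'\, t^{\frac{Q}{2}(1-p) - \frac{p}{2}}$, and integrability near $t=0$ requires $\frac{Q}{2}(1-p) - \frac{p}{2} > -1$, i.e.\ $p(Q+1) < Q+2$, i.e.\ $p < \tfrac{Q+2}{Q+1} = p_1$.

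The steps in order: (i) recall/verify $\bfC(1)$ is symmetric positive definite, so the Gaussian is genuinely integrable and $C_N$ well-defined; (ii) perform the substitution $y = \delta_N(t^{-1/2})x$ and track the Jacobian $t^{Q/2}$ using $\det \delta_N(r) = r^Q$; (iii) reduce the spatial integral to a fixed finite Gaussian integral times a power of $t$; (iv) for the gradient, identify the extra $t^{-1/2}$ scaling coming from differentiating in a first-block direction and confirm the polynomial-times-Gaussian factor is still integrable in $y$ uniformly; (v) integrate the power of $t$ over $(0,T)$ and read off the sharp threshold. I expect the only mildly delicate point to be bookkeeping the scaling exponent of $D_iK$ correctly — one must be careful that $\bfC^{-1}(1)$ is a fixed matrix (no $t$-dependence after the rescaling), so all $t$-dependence is concentrated in the prefactor and the Jacobian; once that is clear, both thresholds $p_0$ and $p_1$ fall out of an elementary one-variable integrability check at $t=0$. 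There is no obstacle at $t=T$ since $T$ is finite and the integrand is bounded there.
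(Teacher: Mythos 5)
Your proposal is correct and follows essentially the same route as the paper: the change of variables $y = \delta_N(t^{-1/2})x$ with Jacobian $t^{Q/2}$, reduction to a fixed Gaussian integral times a power of $t$, and the one-variable integrability check at $t=0$, with the extra $t^{-1/2}$ factor accounting for the shift from $p_0$ to $p_1$. The only cosmetic difference is in step (iv): you track the linear-in-$y$ polynomial from differentiating the exponent and absorb it into the Gaussian integral, while the paper condenses this into the pointwise bound $\abs{D_i K} \leq C t^{-1/2} K$; your version is if anything a touch more careful, since that pointwise inequality as stated only holds after absorbing the polynomial into a slightly smaller Gaussian, but both lead to the identical scaling in $t$ and the same threshold.
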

\begin{proof}
    A direct calculation gives
    \begin{equation*}
        \int_0^T \int_{\R^{N}} \abs{K}^p \,\mathrm{d}x\rmd t = \int_0^T \int_{\R^{N}} \frac{(C_N)^p}{\abs{t}^{\frac{Q}{2}p}} \exp\left(-\frac{p}{4} \pair*{\mathbf{C}^{-1}(1) \delta_N \left(\frac{1}{\sqrt{t}}\right) x, \delta_N \left(\frac{1}{\sqrt{t}}\right)x}\right)\,\mathrm{d}x \rmd t.
    \end{equation*}
    By setting $y = \delta_N\left(\frac{1}{\sqrt{t}}\right)x$ we see 
    \begin{align*}
        \int_0^T \int_{\R^{N}} \abs{K}^p \,\mathrm{d}x\rmd t =& \int_0^T \int_{\R^{N}} \frac{(C_N)^p}{\abs{t}^{\frac{Q}{2}p}} \exp\left(-\frac{p}{4}\pair*{\mathbf{C}^{-1}(1) y, y} \right) \abs*{\det \delta_N\left(\frac{1}{\sqrt{t}}\right)}^{-1}\,\mathrm{d}y\mathrm{d}t\\
        =& (C_N)^p \int_{\R^N} \exp\left(-\frac{p}{4}\pair*{\mathbf{C}^{-1}(1) y, y} \right) \,\mathrm{d}y \int_0^T \abs{t}^{-\frac{Q}{2}p + \frac{Q}{2}}\,\mathrm{d}t .
    \end{align*}
    Hence, as long as 
    \begin{equation*}
        -\frac{Q}{2}p + \frac{Q}{2} >-1, \textrm{ which implies } p < \frac{Q+2}{Q}, 
    \end{equation*}
    $\abs{K}^p$ is integrable. 

    For each $D_i K$, $i=1,\dots, m_0$, it is clear from \cref{eq:kernel} that 
    \begin{equation*}
        \abs{D_i K} \leq \frac{C}{\sqrt{t}} K 
    \end{equation*}
    for some constant $C>0$. {  For the kernel of degenerate Kolmogorov operators in non-divergence form, similar observations can be found in \cite{DFA05}.}
    It follows from the above calculation that we need to choose $p$ such that 
    \begin{equation*}
        -\frac{Q}{2}p -\frac{1}{2}p + \frac{Q}{2}>-1, \textrm{ which implies } p < \frac{Q+2}{Q+1},
    \end{equation*}
    to ensure $\abs{D_i K}^p$ is integrable.
\end{proof}

\subsection{Some embedding results}\label{sec:embedding}
{  For $f,g\in \rmL^1 (\R^N\times(0,T))$, we define the convolution on the homogeneous Lie group $(\R^{N+1}, \circ)$ as follows: extend $f,g$ to be 0 on $\R^{N+1}\setminus (\R^N\times(0,T))$, 
and observe $(y,s)^{-1}\circ (x,t) := (x - \mathbf{E}(t-s)y, t-s)$, then for any $t>0$,
\begin{equation*}
    f*g(x,t):= \int_0^t \int_{\R^{N}} f((y,s)^{-1}\circ(x,t)) g(y,s) \,\mathrm{d}y\mathrm{d}s.
\end{equation*}}
It is known from \cite[Prop.~1.3.21]{BLU07} that the Lebesgue measure on $\R^{N+1}$ is invariant with respect to the left and the right translations on the Lie group $(\R^{N+1},\circ)$.
Thus, Young's convolution inequality applies, see e.g.~\cite[Lemma 1.4]{BCD11}.
\begin{lemma}[Young's convolution inequality]
    Suppose $1\leq p,q,r\leq \infty$ such that
    \begin{equation*}
        \frac{1}{p} +\frac{1}{q} = \frac{1}{r} + 1.
    \end{equation*}
    If $f\in\rmL^p(\R^{N}\times(0,T))$ and $g\in \rmL^q(\R^{N}\times(0,T))$, then $f*g$ exists almost everywhere and is in $\rmL^r(\R^{N+1})$, with
    \begin{equation*}
        \norm{f*g}_{r,\, \R^N\times(0,T)} \leq \norm{f}_{p,\, \R^N\times(0,T)} \norm{g}_{q,\, \R^N\times(0,T)}. 
    \end{equation*}
\end{lemma}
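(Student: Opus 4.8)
The plan is to transplant the classical proof of Young's inequality to the unimodular Lie group $(\R^{N+1},\circ)$, checking that its only structural ingredients survive. The one arithmetic fact used throughout is that $\det\mathbf E(t)=\det e^{-t\bfB}=e^{-t\operatorname{tr}\bfB}=1$ for every $t$, because $\bfB$ is strictly block lower triangular by \cref{assump:B}; hence, for each fixed $(x,t)$ the map $(y,s)\mapsto(y,s)^{-1}\circ(x,t)=(x-\mathbf E(t-s)y,\,t-s)$ has Jacobian of absolute value $1$, and for each fixed $(y,s)$ the map $(x,t)\mapsto(y,s)^{-1}\circ(x,t)$ likewise has unit Jacobian (it is left translation by $(y,s)^{-1}$, Lebesgue-preserving by \cite[Prop.~1.3.21]{BLU07}). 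Thus Lebesgue measure is preserved by all the substitutions the argument will need.

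First I would dispatch the endpoint $r=\infty$, i.e.\ $\tfrac1p+\tfrac1q=1$: for every $(x,t)$, Hölder's inequality in $(y,s)$ and the change of variables above give $\abs{f*g(x,t)}\leq\big(\int\abs{f((y,s)^{-1}\circ(x,t))}^p\,\rmd y\rmd s\big)^{1/p}\norm{g}_{q}=\norm{f}_p\norm{g}_q$. For $1\leq r<\infty$ I would argue by duality: since $\rmL^r(\R^{N+1})=(\rmL^{r'}(\R^{N+1}))^\ast$, it suffices to bound
\begin{equation*}
    I(f,g,h):=\iint_{\R^{2(N+1)}}\abs{f((y,s)^{-1}\circ(x,t))}\,\abs{g(y,s)}\,\abs{h(x,t)}\,\rmd y\rmd s\rmd x\rmd t
\end{equation*}
by $\norm{f}_p\norm{g}_q\norm{h}_{r'}$ for every $h\in\rmL^{r'}$. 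The relation $\tfrac1p+\tfrac1q=\tfrac1r+1$ rewrites as $\tfrac1p+\tfrac1q+\tfrac1{r'}=2$, so with $\alpha:=1-\tfrac1q$, $\beta:=1-\tfrac1{r'}$, $\gamma:=1-\tfrac1p$ one has $\alpha,\beta,\gamma\geq0$, $\alpha+\beta+\gamma=1$, and $\alpha+\beta=\tfrac1p$, $\beta+\gamma=\tfrac1q$, $\alpha+\gamma=\tfrac1{r'}$.

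The core step is then the pointwise factorization (arguments suppressed for readability)
\begin{equation*}
    \abs{f}\cdot\abs{g}\cdot\abs{h}=\big(\abs{f}^p\abs{h}^{r'}\big)^{\alpha}\,\big(\abs{f}^p\abs{g}^q\big)^{\beta}\,\big(\abs{g}^q\abs{h}^{r'}\big)^{\gamma},
\end{equation*}
followed by Hölder's inequality on $\R^{2(N+1)}$ with the three exponents $1/\alpha$, $1/\beta$, $1/\gamma$ (interpreting a vanishing exponent, i.e.\ $q=1$ or $r=1$ or $p=1$, in the usual way so that the corresponding unit factor drops out). This reduces $I(f,g,h)$ to three double integrals, each evaluated by Tonelli together with the unit-Jacobian substitutions: $\iint\abs{f((y,s)^{-1}\circ(x,t))}^p\abs{h(x,t)}^{r'}=\norm{f}_p^p\norm{h}_{r'}^{r'}$ (integrate in $(y,s)$ first), $\iint\abs{f((y,s)^{-1}\circ(x,t))}^p\abs{g(y,s)}^q=\norm{f}_p^p\norm{g}_q^q$ (integrate in $(x,t)$ first), and $\iint\abs{g(y,s)}^q\abs{h(x,t)}^{r'}=\norm{g}_q^q\norm{h}_{r'}^{r'}$. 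Raising these to the powers $\alpha,\beta,\gamma$ and multiplying, the total exponents of $\norm{f}_p$, $\norm{g}_q$, $\norm{h}_{r'}$ collapse to $p(\alpha+\beta)=1$, $q(\beta+\gamma)=1$, $r'(\alpha+\gamma)=1$, which is precisely $I(f,g,h)\leq\norm{f}_p\norm{g}_q\norm{h}_{r'}$. Almost-everywhere existence of $f*g$ falls out along the way (the same Tonelli computation with $\abs f$, $\abs g$ shows $\int\abs{f((y,s)^{-1}\circ(x,t))}\abs{g(y,s)}\,\rmd y\rmd s<\infty$ for a.e.\ $(x,t)$), and the reduction to the stated domains is immediate: extending $f,g$ by zero makes $f*g$ supported in $\R^N\times(0,2T)$, and $\norm{\,\cdot\,}_{r,\,\R^N\times(0,T)}\leq\norm{\,\cdot\,}_{r,\,\R^{N+1}}$. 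I do not expect a genuine obstacle; the only point that must really be verified — and the sole place where the proof deviates from the Euclidean one — is the invariance of Lebesgue measure under the non-commutative group law and under inversion, which holds exactly because $\operatorname{tr}\bfB=0$.
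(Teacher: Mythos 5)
Your proof is correct, but it takes a different route from the paper's. The paper's argument is the classical direct version: for fixed $\xi$, write $\abs{f(\zeta^{-1}\circ\xi)}\abs{g(\zeta)}$ as the product of $\bigl(\abs{f}^p\abs{g}^q\bigr)^{1/r}$, $\abs{f}^{1-p/r}$, and $\abs{g}^{1-q/r}$, apply three-factor H\"older in $\zeta$ with exponents $r$, $\tfrac{rp}{r-p}$, $\tfrac{rq}{r-q}$ to get the pointwise bound $\abs{f*g(\xi)}\leq\bigl(\int\abs{f(\zeta^{-1}\circ\xi)}^p\abs{g(\zeta)}^q\,\rmd\zeta\bigr)^{1/r}\norm{f}_p^{(r-p)/r}\norm{g}_q^{(r-q)/r}$, then raise to the power $r$, integrate in $\xi$, and use Fubini plus translation invariance to evaluate the inner integral. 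Your version instead passes to the dual formulation with a test function $h\in\rmL^{r'}$, factors the trilinear integrand symmetrically in the three pairs $\bigl(\abs{f}^p\abs{h}^{r'}\bigr)^\alpha\bigl(\abs{f}^p\abs{g}^q\bigr)^\beta\bigl(\abs{g}^q\abs{h}^{r'}\bigr)^\gamma$, and applies H\"older on the product space. Both reduce to the same structural facts — three-exponent H\"older plus translation invariance of Lebesgue measure under the group law — so the difference is stylistic rather than substantive. Your treatment does have two advantages over the paper's: you dispatch the endpoint $r=\infty$ separately (the paper's exponents $r$, $\tfrac{rp}{r-p}$, $\tfrac{rq}{r-q}$ degenerate there), and you verify the unimodularity directly via $\operatorname{tr}\bfB=0$ (a consequence of \eqref{assump:B}), rather than only citing \cite[Prop.~1.3.21]{BLU07}; both are small but genuine improvements in rigor at the endpoints.
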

\begin{proof}
    First, observe that we always have $r\geq p,q$. 
    By the H\"older inequality, we have 
    \begin{align*}
        \abs{f * g(\xi)}  
        \leq & \left(\int_{\R^N\times(0,T)}\abs{f(\zeta^{-1}\circ\xi)}^p \abs{g(\zeta)}^q \rmd \zeta\right)^{\frac{1}{r}} \times \norm{f}_{p,\, \R^N\times(0,T)}^{(r-p)/r} \times\norm{g}_{q, \,\R^N\times(0,T)}^{(r-q)/r},
    \end{align*}
    where $\xi = (x,t)$ and $\zeta = (y,s)$.
    It follows from Fubini's theorem that
    \begin{align*}
        \norm*{f * g}_{r,\, \R^N\times(0,T)}^r =& \int_{\R^N\times(0,T)} \abs{f*g(\xi)}^r \,\rmd\xi\\
        \leq & \norm{f}_{p,\R^N\times(0,T)}^{r-p} \norm{g}_{q, \R^N\times(0,T)}^{r-q}\\
        &\times \int_{\R^N\times(0,T)} 
        \int_{\R^N\times(0,T)}\abs{f(\zeta^{-1}\circ\xi)}^p \abs{g(\zeta)}^q \,\rmd \zeta \rmd \xi\\
        \leq & \norm{f}_{p,\R^N\times(0,T)}^{r-p} \norm{g}_{q, \R^N\times(0,T)}^{r-q} \norm{f}_{p,\R^N\times(0,T)}^{p} \norm{g}_{q, \R^N\times(0,T)}^{q},
    \end{align*}
    which completes the proof. 
\end{proof}
{  We note that a local version of Young's convolution inequality, applicable when the underlying domain is only an open subset of $\R^{N+1}$, is presented in \cite{HLW24}.}

The next proposition is a direct consequence of \cref{thm:intkernel} and Young's convolution inequality.
\begin{proposition}[$\rmL^1$--$\rmL^p$ embedding]\label{thm:embedding:l1}
    Let $K$ be defined as in \cref{eq:kernel}, and let $T>0$ be any fixed time.
    If $u\in \rmL^q(\R^{N}\times(0,T))$ for some $1\leq q <\infty$, 
    then 
    \begin{equation*}
        \norm{K*u}_{p,\, \R^{N}\times(0,T)} \leq \sigma_0 \norm{u}_{q,\, \R^{N}\times(0,T)},
    \end{equation*}
    where $\sigma_0 = \norm{K}_{p_0-\varepsilon_0,\, \R^{N}\times(0,T)}$ for all $p\geq 1$ and $0<\varepsilon_0 \leq p_0 - 1$ satisfying
    \begin{equation*}
        \frac{1}{p} = \frac{1}{q} + \frac{1}{p_0 - \varepsilon_0} - 1.
    \end{equation*}

    Moreover, for any $1\leq q<\infty$ and $i=1,2,\dots,m_0$, we have 
    \begin{equation*}
        \norm{D_i (K* u)}_{p,\, \R^{N}\times(0,T)} \leq \sigma_1 \norm{u}_{q,\, \R^{N}\times(0,T)},
    \end{equation*}
    where $\sigma_1= \norm{D_i K}_{p_1-\varepsilon_1,\, \R^{N}\times(0,T)}$ for all $p\geq 1$ and $0<\varepsilon_1 \leq p_1 - 1$ satisfying
    \begin{equation*}
        \frac{1}{p} =  \frac{1}{q} + \frac{1}{p_1 - \varepsilon_1} - 1.
    \end{equation*}
\end{proposition}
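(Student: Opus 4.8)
The plan is to derive both embedding estimates as immediate consequences of \Cref{thm:intkernel} combined with Young's convolution inequality. The structure is: first fix the exponents to satisfy the Young scaling relation $\frac{1}{p} = \frac{1}{q} + \frac{1}{r} - 1$ with $r = p_0 - \varepsilon_0$ (respectively $r = p_1 - \varepsilon_1$ for the gradient estimate), then verify that the chosen $r$ lies in the admissible range $[1, p_0)$ (respectively $[1,p_1)$) so that \Cref{thm:intkernel} guarantees $K \in \rmL^r(\R^N \times (0,T))$ (respectively $D_i K \in \rmL^r$), and finally apply Young's inequality with the pair $(r, q)$ to get $\|K * u\|_p \leq \|K\|_r \|u\|_q = \sigma_0 \|u\|_q$.

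First I would address the constraint $0 < \varepsilon_0 \leq p_0 - 1$: this ensures $1 \leq p_0 - \varepsilon_0 < p_0$, so $K \in \rmL^{p_0 - \varepsilon_0}(\R^N \times (0,T))$ by \Cref{thm:intkernel}, hence $\sigma_0 = \|K\|_{p_0 - \varepsilon_0, \R^N \times (0,T)}$ is finite. I should also note that for every $p \geq 1$ and every admissible $q$, a valid $\varepsilon_0$ exists, but since the statement phrases it as "for all $p \geq 1$ and $0 < \varepsilon_0 \leq p_0 - 1$ satisfying [the scaling relation]", the cleanest reading is: given such a compatible triple, the estimate holds. One routine point to check is that the scaling relation forces $\frac{1}{p} \leq \frac{1}{q}$, i.e.\ $p \geq q$, and also $\frac{1}{p} = \frac{1}{q} + \frac{1}{p_0-\varepsilon_0} - 1 \leq \frac{1}{q}$ with $\frac{1}{p} > 0$ requiring $\frac{1}{q} + \frac{1}{p_0 - \varepsilon_0} > 1$; these are exactly the hypotheses under which Young's inequality with exponent $r$ satisfying $\frac{1}{r} = 1 - \frac{1}{q} + \frac{1}{p}$... wait, rather $\frac{1}{r} = \frac{1}{p} - \frac{1}{q} + 1$, is applicable, and one checks $r = p_0 - \varepsilon_0$ is consistent. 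I would simply set $r := p_0 - \varepsilon_0$, observe $\frac{1}{p} = \frac{1}{q} + \frac{1}{r} - 1$ is precisely the Young relation, and invoke the lemma.

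The gradient estimate is entirely parallel: with $r := p_1 - \varepsilon_1 \in [1, p_1)$, \Cref{thm:intkernel} gives $D_i K \in \rmL^r(\R^N \times (0,T))$, and since differentiation in the $x_i$ variable (for $i \leq m_0$) commutes with the group convolution — because the vector fields $D_i$, $i=1,\dots,m_0$, are left-invariant on $(\R^{N+1},\circ)$, so $D_i(K * u) = (D_i K) * u$ — Young's inequality yields $\|D_i(K*u)\|_p \leq \|D_i K\|_r \|u\|_q = \sigma_1 \|u\|_q$. If one prefers to avoid invoking left-invariance explicitly, one can instead differentiate under the integral sign in the convolution formula and use the pointwise bound $|D_i K| \leq \frac{C}{\sqrt t} K$ together with the same computation as in the proof of \Cref{thm:intkernel}.

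The main obstacle, such as it is, is purely bookkeeping: making sure the admissible range of $\varepsilon_0$ (resp.\ $\varepsilon_1$) is non-empty for the relevant $(p,q)$ and that the scaling relation is compatible with $1 \leq r < p_0$ (resp.\ $1 \leq r < p_1$), and — in the gradient case — justifying $D_i(K*u) = (D_i K)*u$, which follows from the left-invariance of $D_i$ for $i \le m_0$ under the group law $\circ$ (equivalently, a direct differentiation-under-the-integral argument). There is no analytic difficulty beyond \Cref{thm:intkernel} and Young's inequality; the proposition is genuinely a corollary.
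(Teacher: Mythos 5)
Your argument is exactly the one the paper intends: it states that \Cref{thm:embedding:l1} is "a direct consequence of \Cref{thm:intkernel} and Young's convolution inequality" and gives no further proof, and you have simply filled in the routine bookkeeping (admissible range of $\varepsilon_0,\varepsilon_1$, the Young scaling relation, and the commutation $D_i(K*u)=(D_iK)*u$ via left-invariance). Correct, and the same approach.
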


{  Another commonly used embedding result, originally from \cite{Fol75} and cited in \cite{PP04}, is stated below.}
This result is sharp in the sense that it achieves $p_0$ and $p_1$. 
{  It is also a global estimate where the underlying domain in the time direction can be unbounded, since it uses the weak $\rmL^p$ norm for the kernel $K$. But it cannot be estimated in terms of the $\rmL^1$ norm. }
\begin{proposition}[$\rmL^2$--$\rmL^p$ embedding]\label{thm:embedding:l2}
    If $u\in \rmL^q(\R^{N+1})$ for any $1<q <\infty$, then there exists a constant $\varsigma_0$ such that 
    \begin{equation*}
        \norm{K* u}_{p,\, \R^{N+1}} \leq \varsigma_0 \norm{u}_{q,\,\R^{N+1}}
    \end{equation*}
    where 
    \begin{equation*}
        \frac{1}{p} = \frac{1}{q} - \frac{2}{Q+2}.
    \end{equation*}
    Similarly, for any $1<q<\infty$, there exists a constant $\varsigma_1$ such that
    \begin{equation*}
        \norm{D_i (K* u)}_{p,\, \R^{N+1}} \leq \varsigma_1 \norm{u}_{q,\,\R^{N+1}}
    \end{equation*}
    where 
    \begin{equation*}
        \frac{1}{p} =\frac{1}{q} -\frac{1}{Q+2}. 
    \end{equation*}
\end{proposition}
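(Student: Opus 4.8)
The plan is to deduce this sharp $\rmL^2$--$\rmL^p$ (more precisely $\rmL^q$--$\rmL^p$) estimate from the Hardy--Littlewood--Sobolev inequality on the homogeneous Lie group $(\R^{N+1},\circ)$, using that the kernel $K$ lies in the weak Lebesgue space $\rmL^{p_0,\infty}$ rather than in $\rmL^{p_0}$ itself. First I would record the pointwise bound on $K$: from \cref{eq:kernel}, after the substitution $y=\delta_N(1/\sqrt t)x$ used in the proof of \Cref{thm:intkernel}, one sees that the super-level set $\{(x,t)\in\R^{N+1}:K(x,t)>\mu\}$ has Lebesgue measure comparable to $\mu^{-p_0}$; equivalently $K\in\rmL^{p_0,\infty}(\R^{N+1})$ with $p_0=\frac{Q+2}{Q}$, and similarly, using $\abs{D_iK}\le C t^{-1/2}K$, that $D_iK\in\rmL^{p_1,\infty}(\R^{N+1})$ with $p_1=\frac{Q+2}{Q+1}$. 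This is where the restriction to finite time intervals is dropped and the weak-norm (rather than $\rmL^1$) character of the estimate enters, exactly as flagged in the paragraph preceding the statement.

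Next I would invoke the weak Young / Hardy--Littlewood--Sobolev inequality in the translation-invariant measure space $(\R^{N+1},\circ,\mathrm{d}\xi)$: if $f\in\rmL^q$ and $k\in\rmL^{r,\infty}$ with $1<q,r<\infty$ and $\frac1q+\frac1r=1+\frac1p$, $p<\infty$, then $k*f\in\rmL^p$ with $\norm{k*f}_p\lesssim\norm{k}_{r,\infty}\norm{f}_q$. (The proof of this is the standard one — split $k$ into its part above and below a level chosen depending on the point $\xi$, estimate the low part by Young with the genuine $\rmL^s$ norm for $s<r$ and the high part by Young with $s>r$, then optimize — and it uses only the left/right invariance of Lebesgue measure on the group, which is cited from \cite[Prop.~1.3.21]{BLU07}, so no extra structure is needed.) Applying this with $k=K$, $r=p_0$ gives the first claim: $\frac1p=\frac1q+\frac1{p_0}-1=\frac1q-\frac2{Q+2}$; applying it with $k=D_iK$, $r=p_1$, and commuting $D_i$ past the convolution (legitimate since $D_i(K*u)=(D_iK)*u$ by the group structure, $D_i$ being a left-invariant vector field in the first $m_0$ coordinates when acting through the first argument), gives the second claim: $\frac1p=\frac1q+\frac1{p_1}-1=\frac1q-\frac1{Q+2}$. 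One should note that sharpness of the exponents — the fact that we reach $p_0$ and $p_1$ and not merely $p_0-\varepsilon$, $p_1-\varepsilon$ as in \Cref{thm:embedding:l1} — is precisely because weak-$\rmL^{p_0}$ membership holds at the endpoint whereas strong $\rmL^{p_0}$ membership fails.

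The main obstacle, and the only genuinely non-formal point, is justifying the weak Young inequality in the homogeneous-group setting and verifying that $K,\,D_iK$ genuinely sit in the endpoint weak spaces with norms independent of $T$ (indeed on all of $\R^{N+1}$). The former is a known fact but I would either cite it from a standard reference adapted to unimodular groups or include the short level-set-splitting argument; the latter reduces to the scaling computation already performed in \Cref{thm:intkernel}, now read as a distribution-function estimate rather than an $\rmL^p$ computation. Everything else — the arithmetic of Lebesgue exponents, the identity $D_i(K*u)=(D_iK)*u$, and extending $u$ by zero outside any given time slab — is routine. Since the statement as written only asserts existence of the constants $\varsigma_0,\varsigma_1$, I would not track their precise dependence beyond noting it comes from the weak-$\rmL^{p_0}$, weak-$\rmL^{p_1}$ quasi-norms of $K$ and $D_iK$ together with the HLS constant.
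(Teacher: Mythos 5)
Your proposal is correct and reconstructs the standard argument behind the cited result: the paper does not itself prove \Cref{thm:embedding:l2} but cites \cite{Fol75,PP04}, and the remark immediately following the statement already points to exactly the mechanism you use, namely weak-$\rmL^{p_0}$ (resp.\ weak-$\rmL^{p_1}$) membership of $K$ (resp.\ $D_iK$) combined with a weak Young / Hardy--Littlewood--Sobolev inequality on the unimodular Lie group $(\R^{N+1},\circ)$. One small caveat worth noting is that the statement's ``for any $1<q<\infty$'' must carry the implicit constraint $1<p<\infty$ (so $q$ lies strictly below the critical exponent $q_0=\tfrac{Q+2}{2}$ in the first estimate and below $Q+2$ in the second), which you correctly build into your HLS hypothesis via the condition $p<\infty$.
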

\begin{remark}
    These estimates are optimal:  \cref{thm:intkernel} coincides with the special case when $K$ is the heat kernel; 
    by taking $q=2$ in \cref{thm:embedding:l2}, we can recover the Sobolev embedding for the uniformly parabolic case, see \cite{LSU68,DiB93}. 
    Moreover, \cref{thm:intkernel} also coincides with the kinetic Fokker-Planck case as obtained in \cite[Lemma 10]{GM22} (in this case $N = 2m_0$ and $Q = m_0 + 3m_0 = 4m_0$). 
\end{remark}

\subsection{Renormalization of the weak solution}\label{sec:renormal}
In this section, we present the renormalization formula for the subsolution $u\in\rmV^0_{\mathrm{kin}}(\Omega_T)$ which is central to our theory. 
The following lemma is a variation of those renormalization formulas for kinetic Fokker-Planck equations appeared in \cite{Zhu22, AH24}.
\begin{lemma}[Renormalization]\label{thm:renormal}
    Let $\Omega = \calV \times \calU\subset\R^{m_0}\times\R^{N-m_0}$ such that $\partial \calV$ is $\mathrm{C}^{0,1}$ and $\partial \calU$ is $\mathrm{C}^{1,1}$. 
    If $u\in\rmV^0_{\mathrm{kin}}(\Omega_T)$ is a subsolution to \cref{eq:kol},
    then for any $\Phi:\R\to\R$ convex non-decreasing and $v\in \mathrm{C}^1(\overline{\Omega_T})$ satisfying $\Phi'\in\rmW^{1,\infty}(\R)$, $\Phi'(u(x,t)) = 0$ on $(\partial \calV) \times \calU \times (0,T)$, and $v\geq 0$, we have 
    \begin{multline}\label{eq:renormal}
        \iint_{\Omega_T} \Big(-\Phi(u) v_t + \Phi(u) \pair*{\bfB x, Dv} 
        +   a^{ij}D_j \Phi(u) D_i v 
        +    b^i u \Phi'(u) D_i v - c^i (D_i \Phi(u))v\\
        - du\Phi'(u) v 
        +  (a^{ij} D_j u + b^i u + f^i ) D_i u \Phi''(u) v 
        - g\Phi'(u) v +   f^i\Phi'(u) D_i v \Big)\,\mathrm{d}x\mathrm{d}t\\
        \leq  
        \int_{ \Gamma_{\mathrm{K}}} \langle \bfB x, \mathbf{n}_x\rangle \Phi(u_\Gamma) v\,\mathrm{d}S\mathrm{d}t
        + \left[\int_{\Omega} \Phi(u) v\,\mathrm{d}x\right]^0_T ,
    \end{multline}
    where $\rmd S$ is the $N-1$ dimensional Hausdorff measure on $\calV\times\partial\calU$.
\end{lemma}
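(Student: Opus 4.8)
The plan is to derive the renormalization inequality by an approximation procedure, testing the subsolution against a carefully regularized test function. Since $\Phi$ is convex non-decreasing with $\Phi'\in\rmW^{1,\infty}(\R)$, the map $u\mapsto\Phi'(u)v$ has the right structure to serve as a test function, but it fails to be $\mathrm{C}_c^1(\Omega_T)$ for two reasons: it does not have compact support in $\Omega_T$ (it need not vanish near $t=0$, $t=T$, or near $\Gamma_{\mathrm{K}} = \calV\times(\partial\calU)\times(0,T)$), and $\Phi'(u)$ is only as regular as $u$ itself. The first step is therefore to mollify $u$ in a way compatible with the Lie group structure — convolve with a kernel adapted to the transport field $\mathscr{T}=D_t-\pair*{\bfB x,D}$ along characteristics, plus a standard mollification in the $x$-variable — obtaining smooth $u_\varepsilon$ for which $\Phi'(u_\varepsilon)v\in\mathrm{C}^1$, and for which one can integrate by parts against $\mathscr{T}$. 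I would use a commutator (DiPerna–Lions) estimate to control the error terms arising when the mollification is commuted past the transport operator $\pair*{\bfB x,D}$ and past the divergence-form operator $\L$; these errors vanish in $\rmL^2(\Omega_T)$ (respectively $\rmL^2(\calU_T;\rmH^{-1}(\calV))$) as $\varepsilon\to0$ because $u\in\rmH^1_{\mathrm{kin}}(\Omega_T)$.

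Next I would handle the boundary. On $(\partial\calV)\times\calU\times(0,T)$ the hypothesis $\Phi'(u)=0$ (in the trace sense on this lateral face, where $u\in\rmL^2(\calU_T;\rmH^1(\calV))$ has a genuine $\rmH^{1/2}$-trace) kills the boundary contribution from integrating $a^{ij}D_j\Phi(u)D_i v$ and the lower-order terms by parts in $x\in\calV$, as well as any contribution of the transport term there (note $\pair*{\bfB x,\mathbf n_x}$ vanishes on this face by the block structure \cref{assump:B}, since the first $m_0$ rows of $\bfB$ are zero and the normal on $\partial\calV\times\calU$ lies in the $x$-directions). On $\Gamma_{\mathrm{K}}=\calV\times(\partial\calU)\times(0,T)$, integrating $\Phi(u)\pair*{\bfB x,Dv}$ by parts produces the weak-trace flux term $\int_{\Gamma_{\mathrm{K}}}\pair*{\bfB x,\mathbf n_x}\Phi(u_\Gamma)v\,\rmd S\rmd t$: here one uses that $\mathrm{tr}_{\Gamma_{\mathrm{K}}}$ is well-defined on $\rmV^0_{\mathrm{kin}}(\Omega_T)$ with the weight $\abs{\pair*{\bfB x,\mathbf n_x}}^2$, and that $\Phi$ being Lipschitz on the range of $u$ (locally, since $\Phi'\in\rmW^{1,\infty}$) lets us make sense of $\Phi(u_\Gamma)$ against the linear weight. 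The temporal endpoints give $\big[\int_\Omega\Phi(u)v\,\rmd x\big]^0_T$ using $u\in\mathrm{C}([0,T];\rmL^2(\Omega))$ and continuity of $\Phi$. The crucial sign: $\Phi$ convex and $v\ge0$ means $\Phi(u)\ge\Phi(u_\varepsilon)+\Phi'(u_\varepsilon)(u-u_\varepsilon)$, and more to the point the second-order term $\Phi''(u)v\ge0$ appears with the correct sign on the left so that the subsolution inequality (tested against the nonnegative $\Phi'(u_\varepsilon)v$, which is $\ge0$ because $\Phi'\ge0$ by monotonicity and $v\ge0$) survives the limit as an inequality rather than degrading.

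The main obstacle I expect is the rigorous passage to the limit in the mollification, specifically reconciling the two different mollifications (group-convolution in $(x,t)$ for the transport term versus plain $x$-mollification needed to exploit $Du\in\rmL^2$) and showing the commutator errors genuinely vanish — this is where the $\mathrm{C}^{1,1}$ regularity of $\partial\calU$ and $\mathrm{C}^{0,1}$ of $\partial\calV$ enter, to allow extension/reflection near the boundary so the mollifiers are well-defined up to $\partial\Omega$, and to justify that the boundary integrals of the mollified quantities converge to the weak-trace expression. A secondary technical point is that $\Phi$ need not be globally Lipschitz (only $\Phi'\in\rmW^{1,\infty}$ gives local Lipschitz of $\Phi'$, not of $\Phi$), so to keep all integrals finite one first proves the formula for $\Phi$ with $\Phi,\Phi',\Phi''$ bounded — e.g.\ by composing with a truncation — and then removes the truncation via monotone/dominated convergence using $u\in\rmL^\infty_t\rmL^2_x$ and the integrability of the data under \cref{eq:data1} or \cref{eq:data2}; the convexity ensures the truncated $\Phi$ can be chosen still convex non-decreasing so no sign is lost in the limit.
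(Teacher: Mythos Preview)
The paper does not supply its own proof of \cref{thm:renormal}; it introduces the lemma as ``a variation of those renormalization formulas for kinetic Fokker--Planck equations appeared in \cite{Zhu22, AH24}'' and defers entirely to those references for the argument. Your proposal --- mollification adapted to the transport field, DiPerna--Lions commutator control, handling $\Gamma_{\mathrm{K}}$ via the weak trace on $\rmV^0_{\mathrm{kin}}$, and approximation of $\Phi$ by truncation --- is exactly the machinery developed in those references (the convolution--translation technique of \cite{AH24} is what underlies both the weak trace and the renormalization formula, as the paper remarks just before \cref{thm:bdy}), so your plan is aligned with the intended proof.

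One small correction to your sketch: you wrote that $\Phi'\in\rmW^{1,\infty}$ only gives local Lipschitz control of $\Phi$, but in fact $\Phi'\in\rmW^{1,\infty}(\R)$ means $\Phi'$ is globally bounded, hence $\Phi$ is globally Lipschitz; what it does \emph{not} give is boundedness of $\Phi$ itself. This actually simplifies your final truncation step --- the issue is only the growth of $\Phi$ at infinity, not of $\Phi'$ or $\Phi''$, and since $u\in\rmL^\infty_t\rmL^2_x$ and $\Phi$ has at most linear growth, $\Phi(u)\in\rmL^\infty_t\rmL^2_x$ directly, so the approximation by bounded $\Phi$'s is less delicate than you suggest.
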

\begin{remark}
    Regarding the existence of weak solutions in bounded domains, it has been treated for kinetic Fokker-Planck equations using the vanishing viscosity method in \cite{Zhu22,AH24} and the method can be generalized to Kolmogorov equations of the form \cref{eq:kol}.  
\end{remark}
The next lemma is a consequence of the convolution-translation technique which is utilized to obtain the weak trace and also the renormalization formula. For details, we refer to \cite{AH24}.
\begin{lemma}\label{thm:bdy}
    Let $\Omega = \calV \times \calU\subset\R^{m_0}\times\R^{N-m_0}$ such that $\partial \calV$ is $\mathrm{C}^{0,1}$ and $\partial \calU$ is $\mathrm{C}^{1,1}$. 
    If $u\in\rmV^0_{\mathrm{kin}}(\Omega_T)$ is bounded, then 
    \begin{equation*}
        \sup_{\Gamma_{\mathrm{K}}}\mathrm{tr}_{\Gamma_\mathrm{K}}(u) \leq \sup_{\Omega_T} u.
    \end{equation*}
\end{lemma}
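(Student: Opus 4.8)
The plan is to read the bound off directly from the construction of the weak trace in \cite{AH24}, rather than from the renormalization formula \cref{thm:renormal} (which is in any case unavailable here: it requires $\Phi'(u)=0$ on $(\partial\calV)\times\calU\times(0,T)$, a condition we cannot impose). Recall that $\mathrm{tr}_{\Gamma_\mathrm{K}}(u)$ is defined there by a convolution--translation scheme. After localizing near $\Gamma_\mathrm{K}$ and flattening $\partial\calU$ (this is where $\partial\calU\in\mathrm{C}^{1,1}$ and the block structure \cref{assump:B} of $\bfB$ enter — the latter makes $\pair*{\bfB x,D}$ a transport field acting only in the $\calU$-variables, so that $\Gamma_\mathrm{K}$ is exactly the characteristic part of the boundary), one constructs functions $u_\delta$, continuous up to $\Gamma_\mathrm{K}$, of the schematic form
\[
    u_\delta(z)=\int u\big(\theta_\delta(z)\circ w\big)\,\rho_\delta(w)\,\rmd w,
\]
where $\theta_\delta$ is a small inward translation along the characteristic flow $t\mapsto\mathbf{E}(t)$, arranged so that for $z$ near $\Gamma_\mathrm{K}$ and $\delta$ small the integrand is compactly supported in $\Omega_T$, and $\rho_\delta\ge0$ is a mollifier with $\int\rho_\delta=1$. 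The construction yields $u_\delta\to u$ in $\rmV^0_{\mathrm{kin}}$ locally near $\Gamma_\mathrm{K}$ and $u_\delta|_{\Gamma_\mathrm{K}}\to\mathrm{tr}_{\Gamma_\mathrm{K}}(u)$ in $\rmL^2_{\mathrm{loc}}(\Gamma_\mathrm{K},\abs{\pair*{\bfB x,\mathbf{n}_x}}^2)$.

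The new input is the elementary fact that both operations forming $u_\delta$ preserve pointwise upper bounds. Write $S:=\sup_{\Omega_T}u$, finite by hypothesis. Since the Lebesgue measure is left- and right-invariant on $(\R^{N+1},\circ)$, for $z$ near $\Gamma_\mathrm{K}$ and $\delta$ small the relevant values $u\big(\theta_\delta(z)\circ w\big)$ are values of $u$ at points of $\Omega_T$, hence $\le S$ for a.e.\ relevant $w$; because $\rho_\delta\ge0$ has unit mass, averaging preserves this and $u_\delta(z)\le S$ for every such $z$. In particular $u_\delta|_{\Gamma_\mathrm{K}}\le S$ on all of $\Gamma_\mathrm{K}$.

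To conclude I would pass to the limit: pick $\delta_n\to0$ along which $u_{\delta_n}|_{\Gamma_\mathrm{K}}\to\mathrm{tr}_{\Gamma_\mathrm{K}}(u)$ pointwise $\abs{\pair*{\bfB x,\mathbf{n}_x}}^2\,\rmd S\,\rmd t$-a.e.\ on $\Gamma_\mathrm{K}$; such a subsequence exists since $\rmL^2_{\mathrm{loc}}$ convergence implies convergence a.e.\ along a subsequence (via a diagonal argument over a $\sigma$-compact exhaustion of $\Gamma_\mathrm{K}$). Passing to the limit in $u_{\delta_n}|_{\Gamma_\mathrm{K}}\le S$ gives $\mathrm{tr}_{\Gamma_\mathrm{K}}(u)\le S$ a.e.\ with respect to the weight, i.e.\ on $\Gamma_\mathrm{K}^+\cup\Gamma_\mathrm{K}^-$, and therefore $\sup_{\Gamma_\mathrm{K}}\mathrm{tr}_{\Gamma_\mathrm{K}}(u)\le S=\sup_{\Omega_T}u$, which is the claim.

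I do not expect a genuine obstacle, only bookkeeping: one must check that the inward translation $\theta_\delta$ really keeps the mollification inside $\Omega_T$ uniformly near $\Gamma_\mathrm{K}$ (this is carried out in \cite{AH24} using $\partial\calU\in\mathrm{C}^{1,1}$ and the explicit flow $\mathbf{E}(\cdot)$), and one must interpret $\sup_{\Gamma_\mathrm{K}}$ as the essential supremum for the weighted measure, since the trace carries no information on the degeneracy set $\{\pair*{\bfB x,\mathbf{n}_x}=0\}$. Granting the trace construction of \cite{AH24}, the statement reduces to the remark that convolution against a probability kernel and composition with a measure-preserving translation both preserve pointwise upper bounds.
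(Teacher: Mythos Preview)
Your proposal is correct and follows essentially the same approach as the paper: the paper does not write out a proof but states that the lemma is a consequence of the convolution--translation technique from \cite{AH24}, which is precisely the scheme you have spelled out (inward translation plus mollification preserving pointwise upper bounds, then passage to the limit using the $\rmL^2_{\mathrm{loc}}$ convergence of the approximate traces). You have simply made explicit the details that the paper leaves to the cited reference.
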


\section{Proof of Theorem 1: the De Giorgi iteration}\label{sec:proof}
In this section, we employ the De Giorgi iteration and prove \cref{thm:1}. 
We will focus on \cref{eq:data1} first, as the proof for \cref{eq:data2} follows similarly. 
Note that besides the notations already introduced in the introduction, we denote by $a\lesssim_{\gamma} b$ to mean $a\leq C(\gamma) b$.

Before proceeding, we need the following iteration lemma, which lies at the core of the De Giorgi iteration. 
This lemma can be found in \cite{LSU68,DiB93}, among others, and the proof is an easy induction.
\begin{lemma}\label{thm:it1}
    Let $Y_n$, $n=0,1,2,\dots$, be a sequence of positive numbers satisfying the recursive inequalities 
    \begin{equation}\label{eq:thm:it1:1}
        Y_{n+1} \leq C b^n Y_n^{1+\alpha},
    \end{equation}
    where $C>0$, $b >1$ and $\alpha>0$ are given constants. 
    If 
    \begin{equation}\label{eq:thm:it1:2}
        C Y_0^\alpha \gamma \leq 1
    \end{equation}
    where $\gamma$ is such that $\gamma^\alpha = b$,
    then $Y_n \to 0$ as $n\to\infty$.
\end{lemma}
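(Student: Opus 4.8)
The plan is to prove \Cref{thm:it1} by a straightforward induction establishing the quantitative decay bound
\begin{equation*}
    Y_n \leq Y_0 \gamma^{-n},
\end{equation*}
where $\gamma>1$ is the constant with $\gamma^\alpha = b$; since $\gamma>1$ this immediately gives $Y_n\to 0$. The base case $n=0$ is the trivial equality $Y_0\leq Y_0$. For the inductive step, I would assume $Y_n\leq Y_0\gamma^{-n}$ and plug this into the recursion \eqref{eq:thm:it1:1}:
\begin{equation*}
    Y_{n+1} \leq C b^n Y_n^{1+\alpha} \leq C b^n \left(Y_0 \gamma^{-n}\right)^{1+\alpha} = C b^n Y_0^{1+\alpha} \gamma^{-n(1+\alpha)}.
\end{equation*}

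The key bookkeeping step is then to regroup the powers. Using $b = \gamma^\alpha$, write $b^n \gamma^{-n(1+\alpha)} = \gamma^{n\alpha}\gamma^{-n}\gamma^{-n\alpha} = \gamma^{-n}$, so that
\begin{equation*}
    Y_{n+1} \leq C Y_0^{\alpha} \cdot Y_0 \gamma^{-n}.
\end{equation*}
Now I invoke the hypothesis \eqref{eq:thm:it1:2}, namely $C Y_0^\alpha \gamma \leq 1$, equivalently $C Y_0^\alpha \leq \gamma^{-1}$, to conclude
\begin{equation*}
    Y_{n+1} \leq \gamma^{-1} \cdot Y_0 \gamma^{-n} = Y_0 \gamma^{-(n+1)},
\end{equation*}
which closes the induction.

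Having established $0 < Y_n \leq Y_0\gamma^{-n}$ for all $n$, the conclusion $Y_n\to 0$ follows because $\gamma^{-n}\to 0$ (as $\gamma>1$, which is forced by $\gamma^\alpha=b>1$ and $\alpha>0$). I do not anticipate any genuine obstacle here; the only point requiring a little care is the algebraic identity $b^n\gamma^{-n(1+\alpha)}=\gamma^{-n}$, i.e.\ correctly balancing the exponential factor $b^n$ in the recursion against the accumulated decay $\gamma^{-n(1+\alpha)}$ coming from raising the inductive bound to the power $1+\alpha$. The choice $\gamma^\alpha = b$ is precisely what makes this cancellation exact, and \eqref{eq:thm:it1:2} is precisely the threshold condition ensuring the leading constant does not spoil the geometric contraction. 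One could alternatively phrase this via the substitution $Z_n = \log Y_n$ to turn the multiplicative recursion into an affine one, but the direct induction on $Y_n$ is cleaner and is what I would write.
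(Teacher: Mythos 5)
Your proof is correct and is the standard induction argument; the paper does not spell out a proof (it merely remarks that the lemma ``can be found in \cite{LSU68,DiB93}, among others, and the proof is an easy induction''), and the inductive bound $Y_n \leq Y_0\gamma^{-n}$ you establish is precisely that easy induction.
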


Let $0<t_1\leq T$ be a given time and $k\geq M$ a given number. We define 
\begin{equation}\label{eq:uk}
    u_k := \begin{cases}
        (u-k)_+ & \textrm{ in } \Omega_{t_1},\\
        0 & \textrm{ in } \R^{N+1}\setminus \Omega_{t_1}
    \end{cases}
\end{equation}
and also 
\begin{equation}\label{eq:ak}
    \calA_k := \{(x,t)\in\Omega_{t_1}: u(x,t) > k\}. 
\end{equation}
We shall derive iterated inequalities related to $u_k$ for appropriately chosen sequences of $k$'s. 

\subsection{$\rmL^{2p_0}$ embedding for truncations of the undercut}\label{sec:truncate}
First, we need to prove an embedding result for the subsolution similar to the uniformly parabolic case. 
However, here we can only do this for truncations of the undercut of the subsolution.

Consider the following functions $\Psi_{k,l}$ for $l>k$ where $k\geq M$ fixed:
\begin{equation}\label{eq:psi}
    \Psi_{k,l}(r) := \begin{cases}
        0 & \textrm{ for } r\leq k,\\
        2(r-k) & \textrm{ for } k<r<l,\\
        2(l-k) & \textrm{ for } r\geq l. 
    \end{cases}
\end{equation}
We define 
\begin{equation}\label{eq:phi}
    \Phi_{k,l} (r) = \int_{-\infty}^r \Psi_{k,l} (s)\,\rmd s.
\end{equation}
It is then clear by definition that $\Phi'=\Psi\in\rmW^{1,\infty}(\R)$.
Indeed,
\begin{equation}\label{eq:phi1}
    \Phi_{k,l} (r) =
    \begin{cases}
        0, & r\leq k,\\
        (r-k)^2, & k<r<l,\\
        (l-k)(2(r-k)-(l-k)), & l\leq r.
    \end{cases}
\end{equation}
By letting $l\to\infty$, we see $\Phi_{k,l}(r) \to (r-k)_+^2$ and $\Psi_{k,l}\to 2(r-k)_+$. 
We call $\Psi_{k,l}(u)$ and $\Phi_{k,l}(u)$ the truncations of the undercut $u_k$ and $u_k^2$, respectively.

The next proposition is a counterpart of the parabolic embedding for Kolmogorov equations, {  where we first establish higher integrability of $\Phi_{k,l}(u)$ that depends on the truncation parameters $k,l$. In the next section, we improve the integrability in \cref{thm:l2p0embedding1} so that it is independent of $l$.} 
\begin{proposition}\label{thm:l2p0embedding}
    Let \cref{eq:data1} hold, and $0<t_1\leq T$ be any given time. 
    If $u$ is a subsolution to \cref{eq:kol}, and $u_k$ is its undercut as defined in \cref{eq:uk}, 
    then for fixed $k$ and $l$, and $\widehat{p}_0 = p_0 -\varepsilon_0$ where $0<\varepsilon_0\leq p_0 -1$, there exists a constant $C = C(\varepsilon_0, l, \Lambda, \ref{eq:data1})$ such that
    \begin{equation}\label{eq:lpembedding}
        \norm{\Phi_{k,l}(u)}_{\widehat{p}_0, \Omega_{t_1}} < C  \left(\norm{u}_{2,\Omega_{t_1}} + \norm{\nabla_{m_0} u_k}_{2,\Omega_{t_1}}\right) {  < \infty},
    \end{equation}
    where $\nabla_{m_0}:= (D_1,\dots,D_{m_0})$ is the gradient of the first $m_0$ coordinates.
\end{proposition}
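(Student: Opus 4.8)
The plan is to set $w:=\Phi_{k,l}(u)$, use the renormalization formula \cref{thm:renormal} to realize $w$ (extended by zero) as a nonnegative weak subsolution of the principal operator $\K_0$ with vanishing initial trace and an explicit right-hand side, and then bound $w$ through the representation formula and the $\rmL^1$--$\rmL^p$ embedding \cref{thm:embedding:l1}. First note that $\Phi_{k,l}$ is convex and non-decreasing with $\Phi_{k,l}'=\Psi_{k,l}\in\rmW^{1,\infty}(\R)$ — indeed $0\le\Psi_{k,l}\le 2(l-k)$ and $0\le\Phi_{k,l}''\le 2$ — and $\Psi_{k,l}(u)=0$ on $(\partial\calV)\times\calU\times(0,T)$ because $u\le M\le k$ there (this is where $k\ge M$ enters), so \cref{thm:renormal} applies with $\Phi=\Phi_{k,l}$ and any $v\in\mathrm{C}^1(\overline{\Omega_T})$, $v\ge0$. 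In \cref{eq:renormal} the diffusion term $a^{ij}D_ju\,D_iu\,\Phi_{k,l}''(u)v\ge0$ may be discarded, and the right-hand side is $\le0$ for $v\ge0$: on $\Gamma_{\mathrm{K}}^+$ and at the initial surface $\{t=0\}$ the relevant traces of $u$ are $\le M\le k$ so $\Phi_{k,l}$ vanishes there, while on $\Gamma_{\mathrm{K}}^-$ and at $\{t=T\}$ the surviving terms carry the sign favorable to the inequality (using $\Phi_{k,l},v\ge0$ and $\langle\bfB x,\mathbf{n}_x\rangle<0$ on $\Gamma_{\mathrm{K}}^-$). Extending $w$ by zero to $\R^N\times(0,T)$ is legitimate: across $\partial\calV$ because $w\in\rmL^2(\calU_T;\rmH^1_0(\calV))$ (its $\partial\calV$-trace is $\Phi_{k,l}$ of something $\le k$, hence $0$), and across $\Gamma_{\mathrm{K}}$ and past $t_1$ because the induced downward jumps only help a subsolution — which is what the signs just noted record. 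The conclusion is that for all $v\in\mathrm{C}^1_c(\R^N\times[0,T))$ with $v\ge0$,
\[
\iint\big(-w\,v_t+w\langle\bfB x,Dv\rangle+\nabla_{m_0}w\cdot\nabla_{m_0}v\big)\,\rmd z\ \le\ \iint\big(G_0\,v+F^i\,D_iv\big)\,\rmd z,
\]
with $G_0:=c^iD_i\Phi_{k,l}(u)+d\,u\,\Phi_{k,l}'(u)+g\,\Phi_{k,l}'(u)-(b^iu+f^i)D_iu\,\Phi_{k,l}''(u)$ and $F^i:=(\delta^{ij}-a^{ij})D_j\Phi_{k,l}(u)-(b^iu+f^i)\Phi_{k,l}'(u)$, all taken to vanish off $\Omega_{t_1}$; that is, $\K_0 w\le G_0+D_iF^i$ on $\R^N\times(0,T)$ with $w(\cdot,0)=0$.

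The second step is to estimate $G_0$ and $F^i$. Using $D_i\Phi_{k,l}(u)=\Psi_{k,l}(u)D_iu_k$, the bounds $0\le\Psi_{k,l}(u)\le 2(l-k)\le 2l$ and $0\le\Phi_{k,l}''(u)\le2$, the support property $\Psi_{k,l}(u)=\Phi_{k,l}''(u)=0$ off $\{k<u\}$, the pointwise estimates $\Psi_{k,l}(u)\le 2u_k\le2|u|$ (valid since $k\ge M\ge0$) and $|u|\,\mathbf{1}_{\{k<u<l\}}\le l$, and $|\delta^{ij}-a^{ij}|\le1+\Lambda$, together with H\"older's inequality and \cref{eq:data1} (so $c^i,b^i,f^i\in\rmL^{2\tilde q}$, $d,g\in\rmL^{\tilde q}$, $\tilde q>q_0\ge2$), one obtains $G_0\in\rmL^{r_G}(\Omega_{t_1})$ and $F^i\in\rmL^{r_F}(\Omega_{t_1})$ with $r_G=\tfrac{2\tilde q}{\tilde q+2}>1$, $r_F=\tfrac{2\tilde q}{\tilde q+1}>1$, and
\[
\|G_0\|_{r_G,\Omega_{t_1}}+\sum_i\|F^i\|_{r_F,\Omega_{t_1}}\ \le\ C(l,\Lambda,\ref{eq:data1})\,\big(\|u\|_{2,\Omega_{t_1}}+\|\nabla_{m_0}u_k\|_{2,\Omega_{t_1}}\big),
\]
which is finite because $u\in\rmH^1_{\mathrm{kin}}(\Omega_T)\subset\rmL^2(\calU_T;\rmH^1(\calV))$. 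The constant is independent of $k$ (one always has $0\le k<l$) and does not involve $\lambda$, the only $\lambda$-sensitive term having been dropped.

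For the third step, since $K\ge0$, any nonnegative weak subsolution of $\K_0$ with zero initial trace is dominated by the Duhamel term built from its right-hand side, so $0\le w\le K*(G_0+D_iF^i)$ a.e.\ on $\R^N\times(0,t_1)$. I would bound the zero-order part by \cref{thm:embedding:l1} with $q=1$: $\|K*G_0\|_{\widehat{p}_0,\,\R^N\times(0,t_1)}\le\|K\|_{\widehat{p}_0,\,\R^N\times(0,t_1)}\,\|G_0\|_{1,\Omega_{t_1}}$ (and $\rmL^{r_G}(\Omega_{t_1})\hookrightarrow\rmL^1(\Omega_{t_1})$ as $\Omega_{t_1}$ is bounded). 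The divergence part is handled by the second estimate of \cref{thm:embedding:l1}: $\|K*(D_iF^i)\|_{p,\,\R^N\times(0,t_1)}\lesssim\|D_iK\|_{p_1-\varepsilon_1,\,\R^N\times(0,t_1)}\,\|F^i\|_{r_F,\Omega_{t_1}}$ for $\tfrac1p=\tfrac1{r_F}+\tfrac1{p_1-\varepsilon_1}-1$ with a small $\varepsilon_1>0$; since $\tilde q>q_0$ and $Q\ge2$ force $r_F>p_1$, one may choose $p\ge p_0>\widehat{p}_0$, while in the edge case $\widehat{p}_0<r_F$ one takes $p=r_F$ and uses $\rmL^{r_F}(\Omega_{t_1})\hookrightarrow\rmL^{\widehat{p}_0}(\Omega_{t_1})$. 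Adding the two bounds and inserting the estimate of the previous step gives \cref{eq:lpembedding} with $C=C(\varepsilon_0,l,\Lambda,\ref{eq:data1})$, the finiteness of the right-hand side being already known.

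The main difficulty will be making rigorous the divergence part of the representation formula, because the group convolution does not commute cleanly with $D_i$: as $(y,s)^{-1}\circ(x,t)=(x-\mathbf{E}(t-s)y,t-s)$, the derivative $\partial_{y_i}$ applied to $K((y,s)^{-1}\circ(x,t))$ produces, besides $D_iK$, also derivatives $D_lK$ in the subordinate blocks $l>m_0$, which are more singular in $t-s$ than $(t-s)^{-1/2}K$. The resolution is the cancellation built into \cref{assump:B}: the block-$p$ entries of $\mathbf{E}(t-s)=e^{-(t-s)\bfB}$ vanish to order exactly $(t-s)^p$, which precisely offsets the $(t-s)^{-(2p+1)/2}$ singularity of $D_lK$ there, so that $|\partial_{y_i}K((y,s)^{-1}\circ(x,t))|\lesssim(t-s)^{-1/2}K((y,s)^{-1}\circ(x,t))$ and the $\rmL^p$ bound of \cref{thm:intkernel} — which was itself proved only from $|D_iK|\le Ct^{-1/2}K$ — transfers to this quantity. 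A secondary, more routine, point is to justify the zero extensions and the comparison $w\le K*(G_0+D_iF^i)$ for $w\in\rmV^0_{\mathrm{kin}}$ with compact spatial support, e.g.\ by mollifying the kernel and using density in the admissible class of test functions.
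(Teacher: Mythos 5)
Your proposal is correct and follows the same structural route as the paper: renormalize with $\Phi_{k,l}$ and drop the quadratic term $a^{ij}D_juD_iu\,\Phi_{k,l}''(u)v\ge 0$, realize the zero extension of $\Phi_{k,l}(u)$ as a nonnegative distributional subsolution of $\K_0$ on $\R^N\times(0,t_1)$ with right-hand side $D_iF^i+G-\mu$ (Riesz representation with $\mu\ge0$), drop $K*\mu\ge0$, and apply the $\rmL^1$--$\rmL^p$ embedding, then verify finiteness of the norms of $F^i$ and $G$ by H\"older. The H\"older bookkeeping differs slightly: the paper fixes a single exponent $\widehat{p}_1$ satisfying $\widehat{p}_1\cdot\tfrac{2}{2-\widehat{p}_1}=2\widehat{p}_0$ and puts $G$ in $\rmL^1$, whereas you place $F^i\in\rmL^{r_F}$ and $G_0\in\rmL^{r_G}$ with $r_F=\tfrac{2\tilde q}{\tilde q+1}$, $r_G=\tfrac{2\tilde q}{\tilde q+2}$ and then descend by boundedness of $\Omega_{t_1}$; both give the claimed bound.

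The more substantive difference is what you flag as ``the main difficulty'': the paper writes $\Phi_{k,l}(u)=(D_iK)*F^i_{k,l}+K*G_{k,l}-K*\mu$, but $(D_iK)*F^i=D_i(K*F^i)$ (left-invariance of $D_i$), and $D_i$ does not commute with $\K_0$ (one checks $[\K_0,D_i]=\sum_j\bfB_{ji}D_j\ne0$ for $i\le m_0$), so $D_i(K*F^i)$ is \emph{not} the solution to $\K_0 v = D_iF^i$. The correct Duhamel term is $K*(D_iF^i)$, which after integrating by parts in $(y,s)$ produces $\sum_l(\mathbf{E}(t-s))_{li}\,(D_lK)((y,s)^{-1}\circ(x,t))$, not $(D_iK)((y,s)^{-1}\circ(x,t))$. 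Your observation that $(\mathbf{E}(\tau))_{li}\sim\tau^p$ for $l$ in block $p$ exactly cancels the $(t-s)^{-(2p+1)/2}$ scaling of $D_lK$, so that $\abs{\partial_{y_i}K((y,s)^{-1}\circ(x,t))}\lesssim (t-s)^{-1/2}K$, is precisely the fact needed — it shows the resulting kernel has the same $\rmL^{p_1-\varepsilon}$ integrability as $D_iK$ and Young's inequality goes through unchanged. This step is elided in the paper's proof, and your treatment of it is both correct and a genuine tightening of the argument. Minor remarks: a sign flipped between your displayed inequality ($+F^iD_iv$) and your summary $\K_0 w\le G_0+D_iF^i$, but signs of $F^i,G_0$ are immaterial for the estimate since only $\mu\ge0$ matters; and your claim that the constant is independent of $k$ is consistent with the statement, though it relies (as you note) on $\Psi_{k,l}\le 2(l-k)\le 2l$ and the support restriction $k<u<l$ for $\Phi_{k,l}''$.
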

\begin{remark}
    Note that the scaling here is only correct for the tail of $\Phi_{k,l}(u)$, i.e.~the linear part.
\end{remark}
\begin{lemma}\label{thm:psi2phi}
    Assume the conditions in \cref{thm:l2p0embedding}. Then
    \begin{equation}
        \norm{\Psi_{k,l}(u)}_{2\widehat{p}_0, \Omega_{t_1}}^2 = \norm{\Psi_{k,l}^2(u)}_{\widehat{p}_0, \Omega_{t_1}} \leq 4\norm{\Phi_{k,l}(u)}_{\widehat{p}_0, \Omega_{t_1}}.
    \end{equation}
\end{lemma}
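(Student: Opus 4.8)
The plan is to reduce the displayed norm identity to a trivial rewriting of the $\rmL^{\widehat p_0}$ norm, and the displayed inequality to an elementary pointwise bound between $\Psi_{k,l}^2$ and $\Phi_{k,l}$. The first equality is immediate: since $\abs{\Psi_{k,l}^2(u)}^{\widehat p_0} = \abs{\Psi_{k,l}(u)}^{2\widehat p_0}$ pointwise, integrating over $\Omega_{t_1}$ and raising to the power $1/\widehat p_0$ gives $\norm{\Psi_{k,l}^2(u)}_{\widehat p_0,\Omega_{t_1}} = \norm{\Psi_{k,l}(u)}_{2\widehat p_0,\Omega_{t_1}}^2$. (Recall $\widehat p_0 = p_0-\varepsilon_0 \geq 1$ because $\varepsilon_0 \leq p_0-1$, so this is a genuine norm computation.)

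For the inequality, the key claim is the pointwise estimate
\[
\Psi_{k,l}^2(r) \leq 4\,\Phi_{k,l}(r)\qquad\text{for every } r\in\R .
\]
I would verify this directly from the explicit formulas \eqref{eq:psi} and \eqref{eq:phi1}: on $\{r\leq k\}$ both sides vanish; on $\{k<r<l\}$ one has $\Psi_{k,l}^2(r)=4(r-k)^2=4\Phi_{k,l}(r)$, so equality holds; and on $\{r\geq l\}$ one has $\Psi_{k,l}^2(r)=4(l-k)^2$ while $4\Phi_{k,l}(r)=8(l-k)(r-k)-4(l-k)^2$, so the inequality is equivalent to $(l-k)\leq(r-k)$, which is true since $r\geq l>k$. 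An equivalent, slightly slicker route is to set $h:=4\Phi_{k,l}-\Psi_{k,l}^2$; then $h(k)=0$, $h\equiv 0$ on $(-\infty,k]$, and a.e.\ $h'=4\Psi_{k,l}-2\Psi_{k,l}\Psi_{k,l}'=2\Psi_{k,l}\bigl(2-\Psi_{k,l}'\bigr)\geq 0$ because $\Psi_{k,l}\geq 0$ and $\Psi_{k,l}'\in\{0,2\}$; hence $h\geq 0$ everywhere.

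Granting the pointwise bound, I would compose with $u$, integrate, and use that $\Phi_{k,l}\geq 0$ together with monotonicity and positive homogeneity of the $\rmL^{\widehat p_0}$ norm to conclude $\norm{\Psi_{k,l}^2(u)}_{\widehat p_0,\Omega_{t_1}}\leq \norm{4\Phi_{k,l}(u)}_{\widehat p_0,\Omega_{t_1}}=4\norm{\Phi_{k,l}(u)}_{\widehat p_0,\Omega_{t_1}}$, which is the assertion. There is no real obstacle in this lemma; the only point that needs a moment's attention is the regime $r\geq l$, where equality in the pointwise bound fails and one genuinely uses $r\geq l$ (equivalently, the monotonicity of $h$ past $l$).
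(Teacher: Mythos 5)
Your proof is correct and follows essentially the same route as the paper: verify the pointwise bound $\Psi_{k,l}^2\leq 4\Phi_{k,l}$ case by case using the explicit formulas, with the only nontrivial regime being $r\geq l$ where the bound reduces to $(l-k)\leq(r-k)$, and then integrate. The alternative derivative argument you sketch is a nice bonus but is not needed.
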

\begin{proof}
    Note that $\Psi_{k,l}^2(r) \leq 4 \Phi_{k,l}(r)$.
    To see this, recall that $\Phi_{k,l}(r)$ is given by \cref{eq:phi1}, and 
    \begin{equation*}
        \Psi_{k,l}^2(r) =
        \begin{cases}
            0, &r\leq k,\\
            4(r-k)^2, &k<r<l,\\
            4(l-k)^2, &l\leq r.
        \end{cases}
    \end{equation*}
    Observe that if $r\geq l$, $(l-k)(2(r-k)-(l-k))\geq (l-k)^2$. 
    Therefore, the claim is proved, and consequently, the lemma follows. 
\end{proof}

\begin{delayedproof}{thm:l2p0embedding}
    By taking $\Phi = \Phi_{k,l}$, noting $\Phi''_{k,l}=\Psi'_{k,l}\geq 0$ and using the ellipticity assumption \cref{assump:elliptic}, it follows from \cref{thm:renormal} that for all $v\in \mathrm{C}^1(\overline{\Omega_{t_1}})$ with $v\geq 0$, we have 
    \begin{multline*}
        \iint_{\Omega_{t_1}} -\Phi_{k,l}(u) D_t v + \Phi_{k,l}(u) \langle \bfB x, Dv\rangle + D_i \Phi_{k,l}(u) D_i v \,\rmd x\rmd t \\
        \leq \iint_{\Omega_{t_1}} \big( D_i \Phi_{k,l} (u)D_i v -  a^{ij}D_j \Phi_{k,l}(u) D_i v 
        - b^i u \Phi_{k,l}'(u) D_i v 
        + c^i (D_i \Phi_{k,l}(u))v\\
        + du\Phi_{k,l}'(u) v 
        - ( b^i u + f^i ) D_i u \Phi_{k,l}''(u) v 
        + g\Phi_{k,l}'(u) v -  f^i\Phi_{k,l}'(u) D_i v\big) \,\mathrm{d}x\mathrm{d}t.
    \end{multline*}
    Here the boundary term disappears since $k\geq M$.

    { Let's extend $u$ to be 0 on $\R^{N+1}\setminus\Omega_{t_1}$. Hence, $\Phi_{k,l}(u)$ and $\Psi_{k,l}(u)$ are also 0 on $\R^{N+1}\setminus\Omega_{t_1}$.} 
    By the Riesz representation theorem {  and the above equation}, 
    there exists a non-negative Radon measure $\mu$ such that in the sense of distributions, 
    \begin{equation}\label{eq:it1:K0uk2}
        \K_0 (\Phi_{k,l}(u)) =   D_i F^i_{k,l} + G_{k,l} -\mu,
    \end{equation}
    where for fixed $i=1,\dots,m_0$, 
    \begin{equation*}
        F^i_{k,l} = \Psi_{k,l}(u) D_i u_k -a^{ij} \Psi_{k,l}(u) D_j u_k + b^i u \Psi_{k,l}(u) + f^i \Psi_{k,l}(u),
    \end{equation*}
    and 
    \begin{equation*}
        G_{k,l} =   c^i \Psi_{k,l}(u) D_i u_k + b^i u D_i u \Psi_{k,l}'(u) + f^i D_i u \Psi_{k,l}'(u) + d u \Psi_{k,l}(u) + g \Psi_{k,l}(u).
    \end{equation*}

    Note that \cref{eq:it1:K0uk2} yields  
    \begin{equation*}
        \Phi_{k,l}(u) =  (D_i K)*F^i_{k,l} + K* G_{k,l} - K*\mu \leq  (D_i K)*F^i_{k,l} + K* G_{k,l},
    \end{equation*}
    where $K$ is the fundamental solution defined in \cref{eq:kernel}.
    Hence, by applying \cref{thm:embedding:l1}, we get for $\widehat{p}_0 = p_0 -\varepsilon_0$ that 
    \begin{equation}\label{eq:it1:embedding}
        \norm{\Phi_{k,l}(u)}_{\widehat{p}_0,\, \R^{N}\times(0,t_1)} \leq   \sigma_1 \norm{F^i_{k,l}}_{\widehat{p}_1,\, \R^{N}\times(0,t_1)} + \sigma_0 \norm{G_{k,l}}_{1,\, \R^{N}\times(0,t_1)},
    \end{equation}
    where $\widehat{p}_1$ satisfies for some $\varepsilon_1>0$ small to be determined that 
    \begin{equation}\label{eq:it1:hatp1:1}
        \frac{1}{\widehat{p}_0} = \frac{1}{\widehat{p}_1} + \frac{1}{p_1 -\varepsilon_1} -1.
    \end{equation}
    In addition, 
    we impose the following condition on $\widehat{p}_1$ (this will give us the desired H\"older conjugate later):
    \begin{equation}\label{eq:it1:hatp1:2}
        \widehat{p}_1\frac{2}{2-\widehat{p}_1} = 2\widehat{p}_0. 
    \end{equation}
    It is easy to see that as long as we choose $\widehat{p}_1= p_1 -\varepsilon_1$, then for each fixed $\varepsilon_0>0$ small,
    there exists an $\varepsilon_1>0$ small such that \cref{eq:it1:hatp1:1,eq:it1:hatp1:2} hold and we have $1<\widehat{p}_1<p_1<2$.

    Now it remains to check that the right-hand side of \cref{eq:it1:embedding} is finite.
    To do this, we inspect the terms of $F^i_{k,l}$ and $G_{k,l}$ in detail. 
    Note that all the integrals are supported on $\calA_k\subset \Omega_{t_1}$, 
    and that 
    by construction $\Psi_{k,l}(u)$ and $u\Psi_{k,l}'(u)$ are bounded by $2l$. 
    Therefore, for each $i=1,\dots,m_0$, {  by straightforward calculations and a careful application of H\"older's inequality we compute} 
    \begin{align*}
        \norm{\Psi_{k,l}(u) D_i u_k}_{\widehat{p}_1,\calA_k} \lesssim_{l} \norm{D_i u_k}_{\widehat{p}_1,\calA_{k}} \lesssim_{l} \abs{\calA_k}^{\frac{1}{2\widehat{p}_0}} \norm{D_i u_k}_{2, \calA_k},\\
        \norm{a^{ij}\Psi_{k,l}(u) D_j u_k}_{\widehat{p}_1,\calA_k} \lesssim_{l,\Lambda} \norm{D_j u_k}_{\widehat{p}_1,\calA_{k}} \lesssim_{l,\Lambda} \abs{\calA_k}^{\frac{1}{2\widehat{p}_0}} \norm{D_j u_k}_{2, \calA_k},\\
        \norm{b^i u \Psi_{k,l}(u)}_{\widehat{p}_1,\calA_{k}} \lesssim_{l} \norm{b^i u}_{\widehat{p}_1,\calA_k} \lesssim_{l} \norm{b^i}_{2\widehat{p}_0,\calA_{k}} \norm{u}_{2, \calA_{k}},\\
        \norm{f^i \Psi_{k,l}(u)}_{\widehat{p}_1, \calA_{k} } \leq \norm{f^i}_{2\widehat{p}_0,\calA_k} \norm{\Psi_{k,l}(u)}_{2,\calA_k},\\
        \norm{c^i \Psi_{k,l}(u) D_i u_k}_{1,\calA_{k} } \lesssim_{l} \norm{c^i D_i u_k}_{1,\calA_k} \lesssim_{l} \norm{c^i}_{2, \calA_k} \norm{D_i u_k}_{2,\calA_k},\\
        \norm{b^i u \Psi_{k,l}'(u) D_i u_k }_{1,\calA_{k} } \lesssim_{l} \norm{b^i  D_i u_k }_{1,\calA_{k} } \lesssim_{l} \norm{b^i}_{2,\calA_k} \norm{D_i u_k}_{2,\calA_k},\\
        \norm{f^i \Psi_{k,l}'(u) D_i u_k }_{1, \calA_{k} } \leq \norm{f^i  D_i u_k }_{1, \calA_{k} } \leq \norm{f^i}_{2,\calA_k} \norm{D_i u_k}_{2,\calA_k},\\
        \norm{d u \Psi_{k,l}(u)}_{1,\calA_{k} } \lesssim_{l} \norm{d u }_{1,\calA_{k} } \lesssim_{l} \norm{d}_{2,\calA_k} \norm{u}_{2,\calA_k},\\
        \norm{g \Psi_{k,l}(u)}_{1,\calA_k} \leq \norm{g}_{2,\calA_k} \norm{\Psi_{k,l}(u)}_{2,\calA_k}.
    \end{align*} 
    Note when $N\geq2$, $\widehat{p}_0<\tilde{q}$ always holds true.
    It follows that all the terms above are bounded by \cref{eq:data1}, hence \cref{eq:lpembedding} is proved.
\end{delayedproof}

\subsection{$\rmL^{2p_0}$ energy estimate for the undercut in sufficiently small time cylinder}\label{sec:energy}
Now we derive the $\rmL^{2\widehat{p}_0}$ embedding for $u_k$ in $\Omega_{t_1}$ for sufficiently small $t_1>0$, {  which is an improvement of \cref{thm:l2p0embedding}}.

\begin{proposition}\label{thm:l2p0embedding1}
    Let \cref{eq:data1} hold, and $0<t_1\leq T$ be a time sufficiently small (depending on \ref{eq:data1} as constructed in the proof). 
    If $u$ is a subsolution to \cref{eq:kol}, and $u_k$ is its undercut defined in \cref{eq:uk}, 
    then for fixed $k$ and $\widehat{p}_0 = p_0 -\varepsilon_0$ where $\varepsilon_0>0$ is a small constant depending on \cref{eq:data1} (constructed in the proof), the following holds:
    \begin{equation*}
        \norm{u_k^2}_{\widehat{p}_0,\calA_k} 
        \leq \gamma_2 \left(\norm{\nabla_{m_0} u_k}_{2,\calA_k}^2 
        + k^2 \norm{b^i}_{2,\calA_k}^2 + \norm{f^i}_{2,\calA_k}^2 + k^2\norm{d}_{\frac{2\widehat{p}_0}{2\widehat{p}_0-1},\calA_k}^2 + \norm{g}_{\frac{2\widehat{p}_0}{2\widehat{p}_0-1}, \calA_k}^2 \right)
    \end{equation*}
    where $\gamma_2 = \gamma_2(m_0,\Lambda, \sigma_0,\sigma_1, \ref{eq:data1})$.
\end{proposition}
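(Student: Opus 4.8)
The plan is to rerun the argument behind \cref{thm:l2p0embedding}, but now estimating the source terms in the renormalized equation \cref{eq:it1:K0uk2} by quantities that are \emph{independent of the truncation level $l$}, at the price of restricting to a sufficiently small time cylinder. As in the proof of \cref{thm:l2p0embedding}: taking $\Phi=\Phi_{k,l}$ in the renormalization formula \cref{thm:renormal} (the boundary term vanishes because $k\ge M$), extending $u$ by zero off $\Omega_{t_1}$, and invoking the Riesz representation theorem yields $\K_0(\Phi_{k,l}(u))=D_iF^i_{k,l}+G_{k,l}-\mu$ for some nonnegative Radon measure $\mu$, hence $\Phi_{k,l}(u)\le(D_iK)*F^i_{k,l}+K*G_{k,l}$; the $\rmL^1$--$\rmL^p$ embedding \cref{thm:embedding:l1} then gives, writing $X:=\norm{\Phi_{k,l}(u)}_{\widehat{p}_0,\calA_k}$ (finite a priori by \cref{thm:l2p0embedding}),
\begin{equation*}
    X\le\sigma_1\sum_{i=1}^{m_0}\norm{F^i_{k,l}}_{\widehat{p}_1,\calA_k}+\sigma_0\norm{G_{k,l}}_{1,\calA_k},\qquad\text{where }\tfrac{1}{\widehat{p}_1}=\tfrac12+\tfrac{1}{2\widehat{p}_0},
\end{equation*}
with $\widehat{p}_1=p_1-\varepsilon_1$ coupled to $\widehat{p}_0=p_0-\varepsilon_0$ via \cref{eq:it1:hatp1:1,eq:it1:hatp1:2}. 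I fix $\varepsilon_0>0$ small enough that, besides the constraints already used in \cref{thm:l2p0embedding}, the exponents $\tfrac{2\widehat{p}_0}{\widehat{p}_0-1}$ and $\tfrac{\widehat{p}_0}{\widehat{p}_0-1}$ are \emph{strictly} below $2\tilde{q}$ and $\tilde{q}$ respectively; this is possible precisely because $\tilde{q}>q_0$, since at $\varepsilon_0=0$ these exponents are exactly $Q+2=2q_0$ and $q_0$.

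Next I estimate every term of $F^i_{k,l}$ and $G_{k,l}$. On $\calA_k$ I write $u=k+u_k$, splitting each term carrying a bare factor $u$ into a ``$k$-part'' and a ``$u_k$-part''. I use the pointwise bounds (all immediate from \cref{eq:psi,eq:phi1}) $0\le\Psi_{k,l}(u)\le2u_k$, $0\le\Psi'_{k,l}(u)\le2$, $u_k\Psi'_{k,l}(u)\le\Psi_{k,l}(u)$, and, crucially, $u_k\Psi_{k,l}(u)\le2\Phi_{k,l}(u)$; the identity $D_iu=D_iu_k$ on $\calA_k$; \cref{thm:psi2phi} in the form $\norm{\Psi_{k,l}(u)}_{2\widehat{p}_0,\calA_k}\le2X^{1/2}$; and Hölder's inequality, pairing $\rmL^2$ for $D_iu_k$ or a lower-order coefficient with $\rmL^{2\widehat{p}_0}$ for $\Psi_{k,l}(u)$, or with $\rmL^{\widehat{p}_0}$ for $\Phi_{k,l}(u)$. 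Every resulting term falls into one of two classes. The terms of the \emph{first class} do not see $\Phi_{k,l}(u)$ linearly: each is bounded by $X^{1/2}$ times, or a product of two among, the quantities $\norm{\nabla_{m_0}u_k}_{2,\calA_k}$, $k\norm{b^i}_{2,\calA_k}$, $\norm{f^i}_{2,\calA_k}$, $k\norm{d}_{\frac{2\widehat{p}_0}{2\widehat{p}_0-1},\calA_k}$, $\norm{g}_{\frac{2\widehat{p}_0}{2\widehat{p}_0-1},\calA_k}$, possibly carrying one fixed, \cref{eq:data1}-controlled factor $\norm{c^i}_{\frac{2\widehat{p}_0}{\widehat{p}_0-1},\Omega_T}$ or $\norm{b^i}_{\frac{2\widehat{p}_0}{\widehat{p}_0-1},\Omega_T}$ (arising from $c^i\Psi_{k,l}(u)D_iu_k$ and the $u_k$-part of $b^iuD_iu\Psi'_{k,l}(u)$). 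The \emph{second class} consists only of the $u_k$-parts $b^iu_k\Psi_{k,l}(u)$ and $du_k\Psi_{k,l}(u)$: via $u_k\Psi_{k,l}(u)\le2\Phi_{k,l}(u)$ these contribute at most $C(m_0)\bigl(\sigma_1\sum_i\norm{b^i}_{\frac{2\widehat{p}_0}{\widehat{p}_0-1},\Omega_{t_1}}+\sigma_0\norm{d}_{\frac{\widehat{p}_0}{\widehat{p}_0-1},\Omega_{t_1}}\bigr)X$.

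Write $R$ for the sum of the five $\calA_k$-norms above. By Young's inequality the first-class terms sum to at most $\tfrac14X+C_1R^2$ with $C_1=C_1(m_0,\Lambda,\sigma_0,\sigma_1,\ref{eq:data1})$ (the extra $\norm{c^i}$, $\norm{b^i}$ factors over $\Omega_T$ being absorbed into $C_1$, legitimate since $\tfrac{2\widehat{p}_0}{\widehat{p}_0-1}\le2\tilde{q}$). For the second-class coefficient, since $\tfrac{2\widehat{p}_0}{\widehat{p}_0-1}<2\tilde{q}$ and $\tfrac{\widehat{p}_0}{\widehat{p}_0-1}<\tilde{q}$, absolute continuity of the integral lets me pick $t_1=t_1(\ref{eq:data1})>0$ so small that this coefficient is $\le\tfrac14$. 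Plugging into the embedding inequality gives $X\le\tfrac12X+C_1R^2$; since $X<\infty$ we may absorb, obtaining $X\le2C_1R^2$. As $R$ and $C_1$ are independent of $l$ and $\Phi_{k,l}(u)\nearrow u_k^2$ pointwise as $l\to\infty$ (with $u_k^2$ supported in $\calA_k$), monotone convergence upgrades this to $\norm{u_k^2}_{\widehat{p}_0,\calA_k}\le2C_1R^2$; expanding $R^2$ by $(\sum_j a_j)^2\le C\sum_j a_j^2$ yields the stated estimate with $\gamma_2=\gamma_2(m_0,\Lambda,\sigma_0,\sigma_1,\ref{eq:data1})$.

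The main obstacle is the \emph{linear tail} of $\Phi_{k,l}$: on $\{u\ge l\}$ the parabolic scaling $\Psi_{k,l}^2\simeq\Phi_{k,l}$ fails, and the two lower-order terms that multiply a bare $u$, namely $b^iu\Psi_{k,l}(u)$ and $du\Psi_{k,l}(u)$, generate (through $u_k\Psi_{k,l}(u)\le2\Phi_{k,l}(u)$) contributions that are genuinely \emph{linear} in $\norm{\Phi_{k,l}(u)}_{\widehat{p}_0}$ and so cannot be reduced by Young's inequality; they can only be absorbed by making the relevant coefficient norms small, which forces the restriction to a short time horizon and forces $\widehat{p}_0$ to be chosen with a Hölder exponent that is \emph{strictly} subcritical — available exactly because the assumption $\tilde{q}>q_0$ in \cref{eq:data1} is strict (at $\tilde{q}=q_0$ the construction breaks down). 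A secondary nuisance is managing the several exponent constraints at once — the conjugacy $\tfrac{1}{\widehat{p}_1}=\tfrac12+\tfrac{1}{2\widehat{p}_0}$, the requirement $1<\widehat{p}_1<p_1<2$, and $\tfrac{2\widehat{p}_0}{\widehat{p}_0-1}<2\tilde{q}$ — which jointly pin down the admissible $\varepsilon_0$, hence $\sigma_0,\sigma_1$.
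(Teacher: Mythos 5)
Your proposal is correct and follows essentially the same route as the paper: both start from the embedding inequality \cref{eq:it1:embedding}, use the pointwise bounds $\Psi_{k,l}^2\le4\Phi_{k,l}$ and $u_k\Psi_{k,l}(u)\le2\Phi_{k,l}(u)$ (equivalently $u\Psi_{k,l}(u)\le2\Phi_{k,l}(u)+k\Psi_{k,l}(u)$) together with Hölder/Young to absorb all but the $b^i$- and $d$-linear contributions, absorb those by shrinking $t_1$ so the associated coefficient norms over $\Omega_{t_1}$ are small, and finally send $l\to\infty$ by monotone convergence. Your first-class/second-class bookkeeping and the remark that $\tilde q>q_0$ is exactly what makes $\varepsilon_0>0$ admissible are a slightly cleaner articulation of the same argument, not a different proof.
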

\begin{proof}
    Recall the equation \cref{eq:it1:embedding}. 
    We improve the estimates of its terms based on \cref{thm:l2p0embedding,thm:psi2phi}. 
    {  In the following, we will perform delicate calculations using H\"older's and Young's inequality with carefully chosen exponents, which is standard in regularity theory.} 

    Using the assumption \cref{assump:elliptic} and \cref{eq:it1:hatp1:2}, it follows that for fixed $i,j=1,\dots,m_0$,
    \begin{align}
        \label{eq:it1:1}\norm{\Psi_{k,l}(u) D_i u_k}_{\widehat{p}_1, \calA_k}  
        \leq \frac{1}{16 m_0 \sigma_1 } \norm{\Phi_{k,l}(u)}^{}_{\widehat{p}_0, \calA_k} +  16 m_0 \sigma_1 \norm{D_j u_k}_{2,\calA_k}^2,\\
        \label{eq:it1:aij}\norm{a^{ij} \Psi_{k,l}(u) D_j u_k}_{\widehat{p}_1, \calA_{k} } 
        \leq \frac{1}{16 m_0^2 \sigma_1} \norm{\Phi_{k,l}(u)}_{\widehat{p}_0, \calA_k} + 16 m_0^2 \Lambda^2 \sigma_1 \norm{D_j u_k}_{2,\calA_k}^2 ;
    \end{align}
    Next, observe 
    \begin{equation}\label{eq:upsi}
        u\Psi_{k,l}(u) \leq 2\Phi_{k,l}(u) + k \Psi_{k,l}(u).
    \end{equation}
    Indeed, 
    \begin{equation*}
        u\Psi_{k,l}(u) = 
        \begin{cases}
            0, & u\leq k,\\
            2(u-k)u, & k< u <l,\\
            2(l-k)u, &l\leq u, 
        \end{cases}
    \end{equation*}
    and by writing $u= (u-k) + k$, we get 
    \begin{equation*}
        u\Psi_{k,l}(u) = 
        \begin{cases}
            0, & u\leq k,\\
            2(u-k)((u-k)+k) = 2 (u-k)^2 + k 2(u-k), & k< u <l,\\
            2(l-k)((u-k)+k) = 2(u-k)(l-k) + k 2(l-k), &l\leq u. 
        \end{cases}
    \end{equation*}
    Note if $l\leq r$, then $(l-k)(2(r-k)-(l-k))\geq (r-k)(l-k)$, thus \cref{eq:upsi} is proved. 
    Then, for fixed $i=1,\dots,m_0$,
    \begin{multline}\label{eq:it1:biupsi}
        \norm{b^i u \Psi_{k,l}(u)}_{\widehat{p}_1, \calA_k} \leq 2\norm{b^i \Phi_{k,l}(u)}_{\widehat{p}_1,\calA_k} + k\norm{b^i\Psi_{k,l}(u)}_{\widehat{p}_1,\calA_k}  \\
        \leq 2\norm{b^i}_{\frac{\widehat{p}_0 \widehat{p}_1}{\widehat{p}_0-\widehat{p}_1},\calA_k}\norm{\Phi_{k,l}(u)}_{\widehat{p}_0, \calA_{k}} + 16 m_0 \sigma_1 k^2 \norm{b^i}_{2, \calA_{k}}^2+ \frac{1}{16 m_0\sigma_1}\norm{\Phi_{k,l}(u)}_{\widehat{p}_0,\calA_k}
    \end{multline}
    Next, for fixed $i=1,\dots,m_0$,
    \begin{equation}\label{eq:it1:fipsi}
        \norm{f^i \Psi_{k,l}(u)}_{\widehat{p}_1,\calA_k} 
        \leq 16 m_0\norm{f^i}_{2,\calA_k}^2 + \frac{1}{16m_0}\norm{\Phi_{k,l}(u)}_{\widehat{p}_0,\calA_k};
    \end{equation}
    and
    \begin{equation}\label{eq:it1:cipsi}
        \norm{c^i \Psi_{k,l}(u) D_i u_k}_{1,\calA_k} 
        \leq \frac{1}{16 m_0 \sigma_0 } \norm{\Phi_{k,l}(u)}_{\widehat{p}_0, \calA_k} + 16 m_0 \sigma_0 \norm{c^i }_{\frac{2\widehat{p}_0}{\widehat{p}_0 - 1}, \calA_k}^2 \norm{D_i u_k}_{2,\calA_k}^2;
    \end{equation}
    Next, note that $u\Psi_{k,l}'(u) \leq \Psi_{k,l}(u) + 2k$, then for fixed $i=1,\dots,m_0$,
    \begin{multline}\label{eq:it1:biupsi'}
            \norm{b^i u \Psi_{k,l}'(u) D_i u_k}_{1,\calA_k} \leq \norm{b^i \Psi_{k,l}(u)D_i u_k}_{1,\calA_k} + 2k\norm{b^i D_i u_k}_{1,\calA_k} \\
            \leq \frac{1}{16 m_0 \sigma_0}\norm{\Phi_{k,l}(u) }_{\widehat{p}_0, \calA_k} + k^2\norm{b^i}_{2,\calA_k}^2  + \left(16 m_0\sigma_0 \norm{b^i}_{\frac{2\widehat{p}_0}{\widehat{p}_0 - 1}, \calA_k}^2 + 1 \right)\norm{D_i u_k}_{2,\calA_k}^2;
    \end{multline}
    Next, for fixed $i=1,\dots,m_0$,
    \begin{equation}\label{eq:it1:fipsi'}
        \norm{f^i \Psi_{k,l}'(u)D_i u_k}_{1,\calA_k} \leq \norm{f^i}_{2,\calA_k}^2 + \norm{D_i u_k}_{2,\calA_k}^2;
    \end{equation}
    and 
    \begin{multline}\label{eq:it1:dupsi}
        \norm{du\Psi_{k,l}(u)}_{1,\calA_k} \leq 2\norm{ d \Phi_{k,l}(u)}_{1,\calA_k} + k\norm{d\Psi_{k,l}(u)}_{1,\calA_{k}}\\
        \leq 2\norm{d}_{\frac{\widehat{p}_0}{\widehat{p}_0 - 1},\calA_k}\norm{\Phi_{k,l}(u)}_{\widehat{p}_0,\calA_k} + 16\sigma_0 k^2\norm{d}_{\frac{2\widehat{p}_0}{2\widehat{p}_0-1},\calA_k}^2 + \frac{1}{16\sigma_0}\norm{\Phi_{k,l}(u)}_{\widehat{p}_0,\calA_k};
    \end{multline}
    and
    \begin{equation}\label{eq:it1:gpsi}
        \norm{g\Psi_{k,l}(u)}_{1,\calA_k} 
        \leq 16\sigma_0 \norm{g}_{\frac{2\widehat{p}_0}{2\widehat{p}_0-1}, \calA_k }^2 +  \frac{1}{16\sigma_0}\norm{\Phi_{k,l}(u)}_{\widehat{p}_0 ,\calA_k}.
    \end{equation}
    Inspecting the exponents for $b^i,c,d,f^i,g$ in \cref{eq:it1:biupsi,eq:it1:fipsi,eq:it1:cipsi,eq:it1:biupsi',eq:it1:fipsi',eq:it1:dupsi,eq:it1:gpsi}, we can choose $\varepsilon_0$ sufficiently small according to \cref{eq:data1} so that all the terms above containing coefficients are finite.
    Combining all the estimates \cref{eq:it1:1,eq:it1:aij,eq:it1:biupsi,eq:it1:fipsi,eq:it1:cipsi,eq:it1:biupsi',eq:it1:fipsi',eq:it1:dupsi,eq:it1:gpsi}, then \cref{eq:it1:embedding} becomes 
    \begin{multline}
        \norm{\Phi_{k,l}(u)}_{\widehat{p}_0, \calA_k} \leq \gamma_1 \norm{\Phi_{k,l}(u)}_{\widehat{p}_0,\calA_k} \\
        + \gamma_2 \left(\norm{\nabla_{m_0} u_k}_{2,\calA_k}^2 
        + k^2 \norm{b^i}_{2,\calA_k}^2 + \norm{f^i}_{2,\calA_k}^2 + k^2\norm{d}_{\frac{2\widehat{p}_0}{2\widehat{p}_0-1},\calA_k}^2 + \norm{g}_{\frac{2\widehat{p}_0}{2\widehat{p}_0-1}, \calA_k}^2 \right),
    \end{multline}
    where
    \begin{gather*}
        \gamma_1 = \frac{1}{2} + 2m_0\sigma_1 \norm{b^i}_{\frac{\widehat{p}_0 \widehat{p}_1}{\widehat{p}_0 - \widehat{p}_1},\Omega_{t_1}} + 2\sigma_0 \norm{d}_{\frac{\widehat{p}_0}{\widehat{p}_0 -1},\Omega_{t_1}},\\
        \gamma_2 = \gamma_2 \left(m_0, \Lambda, \sigma_0, \sigma_1, \ref{eq:data1}\right).
    \end{gather*}
    It is clear that we can then choose $t_1$ so small that $\gamma_1\leq \frac{3}{4}$, thus
    \begin{multline*}
        \frac{1}{4}\norm{\Phi_{k,l}(u)}_{\widehat{p}_0,\calA_k} \\
        \leq \gamma_2 \left(\norm{\nabla_{m_0} u_k}_{2,\calA_k}^2 
        + k^2 \norm{b^i}_{2,\calA_k}^2 + \norm{f^i}_{2,\calA_k}^2 + k^2\norm{d}_{\frac{2\widehat{p}_0}{2\widehat{p}_0-1},\calA_k}^2 + \norm{g}_{\frac{2\widehat{p}_0}{2\widehat{p}_0-1}, \calA_k}^2 \right).
    \end{multline*}
    Since the right-hand side is independent of $l$, letting $l\to\infty$ and using the monotone convergence theorem completes the proof.  
\end{proof}

\subsection{Caccioppoli estimate for the undercut in sufficiently small time cylinder}\label{sec:caccioppoli}
From this point onward, we can establish a Caccioppoli estimate for the undercut $u_k$.
\begin{proposition}\label{thm:it1:caccioppoli}
    Let $u_k$ be defined as in \cref{eq:uk}. For $t_1>0$ sufficiently small (as determined in \cref{thm:l2p0embedding1}), we have
    \begin{equation}\label{eq:it1:caccioppoli}
        \iint_{\calA_k} \abs{\nabla_{m_0} u_k}^2 \,\rmd z \lesssim_{\lambda} \iint_{\calA_k} \abs{c^i  u_k}^2 + \abs{d u_k^2} + k\abs{d u_k} 
        + \abs{b^i (u_k+k)}^2 + \abs{f^i}^2  + \abs{g u_k}\,\rmd z.
    \end{equation}
\end{proposition}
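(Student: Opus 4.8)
The plan is to apply the renormalization formula \cref{thm:renormal} with the truncation $\Phi=\Phi_{k,l}$ from \cref{eq:phi} and the constant test function $v\equiv1$, and afterwards let $l\to\infty$. The admissibility hypotheses of \cref{thm:renormal} all hold: $\Phi_{k,l}$ is convex non-decreasing with $\Phi_{k,l}'=\Psi_{k,l}\in\rmW^{1,\infty}(\R)$; $v\equiv1\in\mathrm{C}^1(\overline{\Omega_{t_1}})$; and since $k\geq M$ and $(\partial\calV)\times\calU\times[0,T]\subset\Gamma_{\mathrm{P}}$ we have $u\leq M\leq k$ there, hence $\Phi_{k,l}'(u)=\Psi_{k,l}(u)=0$ on $(\partial\calV)\times\calU\times(0,T)$. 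With $v\equiv1$ all terms of \cref{eq:renormal} carrying $v_t$ or $Dv$ vanish, and, moving the non-principal terms to the right, \cref{eq:renormal} over $\Omega_{t_1}$ becomes
\begin{multline*}
    \iint_{\Omega_{t_1}}\bigl(a^{ij}D_ju+b^iu+f^i\bigr)D_iu\,\Phi_{k,l}''(u)\,\rmd z\leq\iint_{\Omega_{t_1}}\bigl(c^i D_i\Phi_{k,l}(u)+du\,\Phi_{k,l}'(u)+g\,\Phi_{k,l}'(u)\bigr)\,\rmd z\\
    +\int_{\Gamma_{\mathrm{K}}}\pair*{\bfB x,\mathbf{n}_x}\Phi_{k,l}(u_\Gamma)\,\rmd S\rmd t+\Bigl[\int_\Omega\Phi_{k,l}(u)\,\rmd x\Bigr]^0_{t_1}.
\end{multline*}

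Next I would show the right-hand side is $\leq0$. On $\Gamma_{\mathrm{K}}^+$ the trace satisfies $u_\Gamma\leq M\leq k$, so $\Phi_{k,l}(u_\Gamma)=0$; on $\Gamma_{\mathrm{K}}^-$ the weight $\pair*{\bfB x,\mathbf{n}_x}$ is negative while $\Phi_{k,l}\geq0$; hence the surface integral is non-positive. Since $u(\cdot,0)\leq M\leq k$ on $\Omega\times\{0\}\subset\Gamma_{\mathrm{P}}$ we get $\Phi_{k,l}(u(\cdot,0))=0$, so the time term equals $-\int_\Omega\Phi_{k,l}(u(\cdot,t_1))\,\rmd x\leq0$. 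On the left, \cref{assump:elliptic} together with $\Phi_{k,l}''(u)=\Psi_{k,l}'(u)=2\cdot\mathbf{1}_{\{k<u<l\}}$ and $D_iu=D_iu_k$ on $\calA_k$ gives the lower bound $2\lambda\iint_{\{k<u<l\}}\abs{\nabla_{m_0}u_k}^2\,\rmd z$, while the remaining piece $(b^iu+f^i)D_iu\,\Phi_{k,l}''(u)$ is moved to the right. Note $D_i\Phi_{k,l}(u)=\Psi_{k,l}(u)D_iu_k$ and $\Phi_{k,l}'(u)=\Psi_{k,l}(u)$ are supported in $\calA_k$.

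The key step is the passage $l\to\infty$. Using the elementary inequalities $\Psi_{k,l}(u)\leq 2u_k$, $0\leq\Psi_{k,l}'(u)\leq 2$ and $u\Psi_{k,l}(u)\leq 2\Phi_{k,l}(u)+k\Psi_{k,l}(u)\leq 2u_k^2+2ku_k$ on $\calA_k$ (with $u=u_k+k$ there), each integrand on the right is bounded, uniformly in $l$, by a fixed combination of $\abs{c^iu_k}\abs{\nabla_{m_0}u_k}$, $\abs{du_k^2}$, $k\abs{du_k}$, $(\abs{b^i}(u_k+k)+\abs{f^i})\abs{\nabla_{m_0}u_k}$ and $\abs{gu_k}$. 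This is exactly where \cref{thm:l2p0embedding1} enters: the higher integrability $u_k\in\rmL^{2\widehat{p}_0}(\calA_k)$ it provides, combined with the exponents of \cref{eq:data1}, places each of these dominating terms in $\rmL^1(\calA_k)$ for $\varepsilon_0$ small. Monotone convergence on the left and dominated convergence on the right then yield
\begin{equation*}
    2\lambda\iint_{\calA_k}\abs{\nabla_{m_0}u_k}^2\,\rmd z\leq\iint_{\calA_k}\Bigl(2\abs{c^iu_k}\abs{D_iu_k}+2\abs{du_k^2}+2k\abs{du_k}+2\bigl(\abs{b^i}(u_k+k)+\abs{f^i}\bigr)\abs{D_iu_k}+2\abs{gu_k}\Bigr)\,\rmd z.
\end{equation*}

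Finally, since $\nabla_{m_0}u_k\in\rmL^2(\Omega_{t_1})$ the left side is finite, so Young's inequality applied to the three products containing $\abs{D_iu_k}$, with the small parameters chosen so that the coefficient of $\abs{\nabla_{m_0}u_k}^2$ thereby produced on the right equals $\lambda$, allows me to absorb that term; dividing by $\lambda$ gives \cref{eq:it1:caccioppoli}. The genuinely delicate point is the limit $l\to\infty$, which is precisely what forces the use of \cref{thm:l2p0embedding1} and hence the smallness of $t_1$; the control of the boundary contributions and the concluding Young absorption are routine.
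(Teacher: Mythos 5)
Your proposal is correct and follows essentially the same route as the paper: apply the renormalization formula with $\Phi_{k,l}$ and $v\equiv 1$, use ellipticity on the principal term, pass $l\to\infty$ (justified by the uniform $\rmL^{2\widehat{p}_0}$ control from \cref{thm:l2p0embedding1} and the data exponents), and then absorb the gradient cross-terms via Young's inequality. The only cosmetic difference is ordering — you take the limit before applying Young, whereas the paper applies Young first and then passes to the limit before absorbing — and you are somewhat more explicit than the paper about why the $\Gamma_{\mathrm{K}}$ and initial-time contributions are non-positive; neither changes the substance.
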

\begin{proof}
    Choosing $\Phi = \Phi_{k,l}$ and $v=1$ in \cref{thm:renormal}, we find that 
    \begin{multline*}
        \iint_{\Omega_{t_1}} -c^i \Psi_{k,l}(u) D_i u_k - du \Psi_{k,l}(u) + a^{ij}D_j u_k D_i u_k \Psi_{k,l}'(u) + b^i u D_i u_k \Psi_{k,l}'(u) \\
        + f^i D_i u_k \Psi_{k,l}'(u) - g \Psi_{k,l}(u) \,\rmd x\rmd t 
        \leq 0
    \end{multline*}
    { Therefore, using the elliptic assumption \cref{assump:elliptic} and the estimates \cref{eq:upsi,eq:it1:biupsi'} from the proof of \cref{thm:l2p0embedding1}, we obtain 
    \begin{multline*}
        \int_{\calA_k} \lambda \abs{\nabla_{m_0} u_k}^2 \Psi_{k,l}'(u)\,\rmd z \leq \int_{\calA_k} \frac{1}{\lambda}\abs{c^i \Psi_{k,l}(u)}^2 + \frac{\lambda}{4}\abs{D_i u_k}^2 + \abs{d(2\Phi_{k,l}(u)+k\Psi_{k,l}(u))} \\
        + \frac{1}{\lambda}\abs{b^i(\Psi_{k,l}(u)+2k)}^2 + \frac{\lambda}{4}\abs{D_i u_k}^2 + \frac{1}{\lambda}\abs{\Psi_{k,l}'(u)f^i}^2 + \frac{\lambda}{4}\abs{D_i u_k}^2 + \abs{g\Psi_{k,l}(u)}\,\rmd x\rmd t.  
    \end{multline*}
    From \cref{thm:l2p0embedding1}, we know $\norm{\Psi_{k,l}(u)}_{2\widehat{p}_0,\calA_k}^2$ is uniformly bounded in $l$, hence we first let $l\to\infty$ and then absorb the terms $\abs{\nabla_{m_0}u_k}^2$ to derive \cref{eq:it1:caccioppoli}.}
\end{proof}

\subsection{Iteration inequalities}\label{sec:iteration}
{ Now, by combining \cref{thm:l2p0embedding1,thm:it1:caccioppoli}, and repeating the arguments similar to the proof of \cref{thm:l2p0embedding1}}, we can choose $t_1$ to be even smaller, depending on $\gamma_2$ and $\lambda$, to obtain  
\begin{equation*}
    \norm{u_k^2}_{\widehat{p}_0,\calA_k} \\
    \leq \gamma \left( 
    k^2 \norm{b^i}_{2,\calA_k}^2 + \norm{f^i}_{2,\calA_k}^2 + k^2\norm{d}_{\frac{2\widehat{p}_0}{2\widehat{p}_0-1},\calA_k}^2 + \norm{g}_{\frac{2\widehat{p}_0}{2\widehat{p}_0-1}, \calA_k}^2 \right).
\end{equation*}

{ Next, choose 
$q'<\widehat{p}_0$ to be determined later and 
\begin{equation*}
    q'' = \frac{2\widehat{p}_0 2q'}{(2\widehat{p}_0-1)2q' - 2\widehat{p}_0},
\end{equation*}
it follows from H\"older's inequality that }
\begin{align*}
    \norm{b^i}_{2,\calA_k}^2 \leq \norm{b^i}_{\frac{2q'}{q'-1},\calA_k}^2 \abs*{\calA_k}^{\frac{1}{q'}},&
    \qquad \norm{f^i}_{2,\calA_k}^2 \leq \norm{f^i}_{\frac{2q'}{q'-1},\calA_k}^2 \abs*{\calA_k}^{\frac{1}{q'}},\\
    \norm{d}_{\frac{2\widehat{p}_0}{2\widehat{p}_0 - 1}, \calA_k}^2 \leq \norm{d}_{q'', \calA_k}^2 \abs*{\calA_k}^{\frac{1}{q'}},&
    \qquad \norm{g}_{\frac{2\widehat{p}_0}{2\widehat{p}_0 - 1}, \calA_k}^2 \leq \norm{g}_{q'', \calA_k}^2 \abs*{\calA_k}^{\frac{1}{q'}}.
\end{align*}

It can be verified that {  $2q'/(q'-1)$ and $q''$ decrease to the exponents specified in \cref{eq:data1} as $q'$ approaches $\widehat{p}_0$. Therefore, they are admissible exponents for the coefficients, provided $q'$ is sufficiently close to $\widehat{p}_0$}. 

Combining all the estimates above, and assuming $k\geq 1$, we get 
\begin{equation}\label{eq:it1:iteration}
    \norm{u_k}_{2\widehat{p}_0 , \calA_k}^2 \leq \gamma k^2 \abs*{\calA_k}^{\frac{1}{q'}}
\end{equation}
where, abusing notation slightly, $\gamma$ now depends additionally on $\norm{b^i}_{\frac{2q'}{q'-1},\calA_k}^2$, $\norm{f^i}_{\frac{2q'}{q'-1},\calA_k}^2 $, $\norm{d}_{q'', \calA_k}^2$, and $\norm{g}_{q'', \calA_k}^2$.

\subsection{Proof of \cref{eq:linfl2}}\label{sec:proof1}
To prove \cref{eq:linfl2}, we define the following quantities: let 
\begin{equation*}
    k_n := M+ k -\frac{k}{2^n}
\end{equation*}
for some $k\geq \max\{1,M\}$, and denote $\calA_n :=\calA_{k_n} $ {  and $u_n:= u_{k_n}$}. 
Define 
\begin{equation*}
    Y_n:=\int_{\Omega_{t_1}} \abs{u_n}^2\,\rmd z.
\end{equation*}
It follows {  from chebyshev's inequality that 
\begin{equation*}
    \abs{\calA_{n+1}} \leq  \frac{2^{2(n+1)}}{k^2} Y_n .
\end{equation*}}
Thus, by { using \cref{eq:it1:iteration} and} setting $\alpha = \frac{1}{q'}-\frac{1}{\widehat{p}_0}$, we derive 
\begin{equation*}
    Y_{n+1} \leq \gamma k^2 \left\vert \frac{2^{2n+2}}{k^2} Y_n\right\vert^{1 + \alpha}  \leq  \frac{\gamma}{k^{2\alpha}} 2^{2(1+\alpha)n} Y_n^{1+ \alpha}.
\end{equation*}
This is of the form \cref{eq:thm:it1:1} {  in \cref{thm:it1}}, and by choosing 
\begin{equation*}
    \frac{\gamma}{k^{2\alpha}} Y_0^\alpha 2^{\frac{2}{\alpha}+ 2}\leq 1,
\end{equation*}
which implies 
\begin{equation*}
    k^{2\alpha}\geq \gamma 2^{\frac{2}{\alpha} + 2} \norm{(u-M)_+}_{2,\Omega_{t_1}}^{2\alpha},
\end{equation*}
we conclude $\lim_{n\to\infty } Y_n = 0$.

Therefore,  
\begin{equation*}
    \sup_{\Omega_{t_1}} u \leq M + C_0 \max\left\{1, M, \norm{(u-M)_+}_{2,\Omega_{t_1}}\right\}
\end{equation*}
where $C_0 = C_0\mathrm{\cref{eq:data1}}$. 

By propagating this inequality over finitely many subintervals of $(0,T)$, we obtain the desired inequality \cref{eq:linfl2} {  in \cref{thm:1}}.
For instance, on $(t_1,t_2)$, we may choose 
\begin{equation*}
    M_1 = M + C_0 \max\{1, M, \norm{(u-M)_+}_{2,\Omega_{t_1}}\}
\end{equation*}
and by the above iteration procedure, we get 
\begin{equation*}
    \sup_{\Omega_{t_1,t_2}} u \leq M_1 + C_1 \max\left\{1, M, \norm{(u-M)_+}_{2,\Omega_{t_1,t_2}}\right\}.
\end{equation*} 
Combining all such estimates yields the desired inequality \cref{eq:linfl2} where the boundary inequality is a consequence of \cref{thm:bdy}.

\subsection{Proof of \cref{eq:linfdata}}\label{sec:proof2}
Next, we set 
\begin{equation*}
    k_n := h\left(2-\frac{1}{2^n}\right), \quad n=0,1,2,\dots
\end{equation*}
and define $u_n :=u_{k_n}$ and $\calA_n :=\calA_{k_n} $.

Observe that {  from Chebyshev's inequality we have}
\begin{equation*}
    \norm{u_n}_{2\widehat{p}_0, \calA_n} \geq (k_{n+1} - k_n) \abs{\calA_{n+1}}^{\frac{1}{2\widehat{p}_0}},
\end{equation*}
thus 
\begin{equation}\label{eq:it:3}
    \abs{\calA_{n+1}}^{\frac{1}{2\widehat{p}_0}} \leq \sqrt{\gamma} \frac{k_n}{k_{n+1} - k_n} \abs{\calA_{n}}^{\frac{1}{2q'}} \leq \sqrt{\gamma} 2^n \abs{\calA_{n}}^{\frac{1}{2q'}}
\end{equation}
which simplifies to
\begin{equation*}
    \abs{\calA_{n+1}} \leq \gamma^{\widehat{p}_0} (2^{2\widehat{p}_0})^n \abs{\calA_{n}}^{\frac{\widehat{p}_0}{q'}}.
\end{equation*}

Recalling that we have chosen $q'<\widehat{p}_0$, hence $\frac{\widehat{p}_0}{q'}>1$, we can apply \cref{thm:it1} provided \cref{eq:thm:it1:2} holds, i.e.
\begin{equation*}
    \gamma^{\widehat{p}_0} \abs{\calA_0}^{\frac{\widehat{p}_0}{q'}-1} \eta \leq 1
\end{equation*}
where $\eta^{\frac{\widehat{p}_0}{q'}-1} = 2^{2\widehat{p}_0}$.
To satisfy this condition, we set $h = \sigma \max\{M,1\}$ for some $\sigma>1$, replace $k_{n+1}$ by $h$, and $k_n$ by $M$ (or 1 if $M<1$) in \cref{eq:it:3} to obtain 
\begin{equation*}
    \abs{\calA_0}^{\frac{1}{2\widehat{p}_0}} \leq \sqrt{\gamma} \frac{M}{h-M} \abs{\{u>M\}}^{\frac{1}{2q'}} \leq \sqrt{\gamma} \frac{1}{\sigma - 1} \abs{\{u>M\}}^{\frac{1}{2q'}} \leq \frac{\sqrt{\gamma}}{\sigma - 1} \abs{t_1}^{\frac{1}{2q'}} \abs{\Omega}^{\frac{1}{2q'}}.
\end{equation*}
Consequently, by choosing 
\begin{equation*}
    \sigma = 1 + \gamma\abs{t_1}^{\frac{1}{2q'}}\abs{\Omega}^{\frac{1}{2q'}} \eta^{\frac{1}{2\widehat{p}_0}\frac{q'}{\widehat{p}_0 - q'}},
\end{equation*}
we obtain  
\begin{equation*}
    \sup_{\Omega_{t_1}} u \leq (1+C_0) \max\{M,1\} ,
\end{equation*}
where $C_0 = \sigma-1$. 

By propagating this on finitely many subintervals of $(0,T)$ as done in \cref{sec:proof1}, we derive the desired inequality \cref{eq:linfdata} {  in \cref{thm:1}} where the boundary inequality is a consequence of \cref{thm:bdy}.

\subsection{Iteration under Data 2}\label{sec:proof:data2}
The general framework for the proof under \cref{eq:data2} is the same as before, with the difference that in this case, we can use \cref{thm:embedding:l2} to achieve sharp exponents. Here, we outline the proof idea.

Recall for any $0<t_1 < T$ we define $u_k$ and $\calA_k$ as in \cref{eq:uk,eq:ak}.
Since $b^i,f^i=0$,
by setting $\Phi(\cdot) = (\cdot-k)_+$ in \cref{eq:renormal} (by approximating $(r-k)_+$ by convex non-decreasing $\mathrm{C}^{1,1}(\R)$ functions, e.g.~\cite[Lemma 7.6]{GT01}, and using the convexity), we get for all $v\in \mathrm{C}^1(\overline{\Omega_T})$ with $v\geq 0$ that 
\begin{multline}\label{eq:renormalsub:divfree}
    \iint_{\Omega_{t_1}} \left(-u_k D_t v + u_k \pair*{\bfB x, Dv}
    +   a^{ij}D_j u_k D_i v\right)\,\rmd x\rmd t \\
    \leq \iint_{\Omega_{t_1}} \left(  c^i (D_i u_k)v
    + du\chi_{\{u>k\}} v  
    + g\chi_{\{u>k\}} v \right)\,\mathrm{d}x\mathrm{d}t.
\end{multline}

It follows from the Riesz representation theorem that there exists a non-negative Radon measure $\mu$ such that 
\begin{equation}\label{eq:kernelrepr}
    \K_0 u_k =   D_i F^i_k + G_k -\mu,
\end{equation}
where for each $i=1,\dots,m_0$, $ F^i = D_i u_k -a^{ij} D_j u_k$,
and $G =   c^i D_i u_k + d u\chi_{\{u>k\}} + g \chi_{\{u>k\}}$.

Similar to the case of \cref{eq:data1}, there is no known Sobolev embedding theorems from the function space $\rmV^0_{\mathrm{kin}}(\Omega_T)$ itself, thus we need to show the finiteness of $\norm{u_k}_{2p_0, \calA_k}$. 
However, the presence of $du\chi_{\{u>k\}}$ in \cref{eq:renormalsub:divfree} introduces a challenge since there is not enough integrability on $u$. 
{  More precisely, we aim to apply the H\"older inequality to the term $du\chi_{\{u>k\}}$, and we intend to distribute $\rmL^{\tilde{q}}$ norm onto $d$. However, there is no any a priori higher integrability of $u\chi_{\{u>k\}}$, except for $\rmL^2$.} 
This difficulty is also noted in \cite[Sec.~3.1]{AR22}, where the authors constructed an "ad hoc" Sobolev embedding for solutions by choosing a { smaller exponent on $d$} and continuing with the Moser iteration. 
Here, we use a similar approach, but to maintain consistency with the De Giorgi iteration, we apply a bootstrapping argument to achieve higher integrability.

\begin{lemma}[Sobolev embedding]\label{thm:embedding:data2}
    Let \cref{eq:data2} holds, and $0<t_1\leq T$ be any given time. 
    If $u$ is a subsolution to \cref{eq:kol}, and $u_k$ be its undercut defined as in \cref{eq:uk}, then $\norm{u_k}_{2p_0,\calA_k}<\infty$.
\end{lemma}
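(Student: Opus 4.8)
The plan is to prove $\norm{u_k}_{2p_0,\calA_k} < \infty$ by a bootstrapping argument, exploiting the kernel representation \cref{eq:kernelrepr} together with the embedding \cref{thm:embedding:l2}, rather than the $\rmL^1$--$\rmL^{p_0}$ embedding used under \cref{eq:data1}. The fundamental obstacle, as emphasized in the text preceding the statement, is the term $du\chi_{\{u>k\}}$: a priori we only know $u_k \in \rmL^2$, so applying H\"older with $d \in \rmL^{\tilde q}$ to $du\chi_{\{u>k\}}$ does not immediately place $G_k$ in a good enough space. The idea is to iterate: use a provisional lower integrability to get a better one, and repeat finitely many times until reaching $2p_0$.

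Concretely, first I would extend $u_k$ by zero off $\Omega_{t_1}$ and, from \cref{eq:kernelrepr}, write $u_k = (D_iK)*F_k^i + K*G_k - K*\mu \leq (D_iK)*F_k^i + K*G_k$. The contribution of $F_k^i = D_iu_k - a^{ij}D_ju_k \in \rmL^2$ is harmless: since $D_iu_k \in \rmL^2(\calA_k)$ and $\calA_k$ has finite measure, by \cref{thm:embedding:l2} (with $q=2$, which is admissible as $Q\geq 2$) we get $\norm{(D_iK)*F_k^i}_{2p_0, \R^{N+1}} \lesssim \norm{F_k^i}_{2,\R^{N+1}} < \infty$ after checking $\tfrac{1}{2p_0} = \tfrac12 - \tfrac{1}{Q+2}$, i.e.~$2p_0 = \tfrac{Q+2}{Q/2} \cdot \tfrac{?}{}$ — in any case the exponent produced by the gradient embedding is exactly $2p_0$. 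Similarly the pieces of $G_k$ coming from $c^iD_iu_k$ (with $c^i \in \rmL^{Q+2}$, so $c^iD_iu_k \in \rmL^{q}$ for some $q>1$ via H\"older, using $|\calA_k|<\infty$) and from $g\chi_{\{u>k\}}$ (with $g \in \rmL^{\tilde q}$, $\tilde q > q_0 = \tfrac{Q+2}{2}$) land, after applying \cref{thm:embedding:l2}, in $\rmL^{2p_0}$ or better. The only troublesome summand is $du\chi_{\{u>k\}} = du_k + kd\chi_{\{u>k\}}$; the second part is controlled by $d \in \rmL^{\tilde q}$ on a finite-measure set, but the first part, $du_k$, requires knowing integrability of $u_k$ beyond $\rmL^2$.

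This is where the bootstrap enters. Suppose inductively $u_k \in \rmL^{s_n}(\calA_k)$ for some $s_n \geq 2$. Then $du_k \in \rmL^{r_n}$ with $\tfrac{1}{r_n} = \tfrac{1}{\tilde q} + \tfrac{1}{s_n}$ by H\"older; feeding $G_k$ (now in $\rmL^{\min\{r_n, \dots\}}$) into \cref{thm:embedding:l2} yields $u_k \in \rmL^{s_{n+1}}$ with $\tfrac{1}{s_{n+1}} = \tfrac{1}{r_n} - \tfrac{2}{Q+2} = \tfrac{1}{\tilde q} + \tfrac{1}{s_n} - \tfrac{2}{Q+2}$. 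Since $\tilde q > q_0 = \tfrac{Q+2}{2}$ we have $\tfrac{1}{\tilde q} - \tfrac{2}{Q+2} < 0$, so the map $\tfrac1s \mapsto \tfrac1s + (\tfrac1{\tilde q} - \tfrac2{Q+2})$ strictly decreases $\tfrac1s$ by a fixed positive amount at each step; after finitely many iterations $\tfrac{1}{s_n}$ drops below $\tfrac{1}{2p_0}$ (or becomes $\le 0$, in which case one gets any finite exponent), and then H\"older on the finite-measure set $\calA_k$ gives $u_k \in \rmL^{2p_0}(\calA_k)$. One must also track that the $c^iD_iu_k$, $kd\chi_{\{u>k\}}$, $g\chi_{\{u>k\}}$ and $F_k^i$ contributions remain in $\rmL^{2p_0}$ or better throughout — they do, being fixed and not part of the iterated quantity. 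The main obstacle is precisely organizing this bootstrap cleanly: verifying at each stage that every term on the right of \cref{eq:kernelrepr} lies in the space demanded by \cref{thm:embedding:l2}, that the finiteness of $|\calA_k|$ and of the coefficient norms is genuinely being used (so the argument does not secretly need $u_k$ integrability it hasn't yet established), and that the decrement $\tfrac{2}{Q+2} - \tfrac{1}{\tilde q} > 0$ indeed guarantees termination after a number of steps depending only on $Q$ and $\tilde q$.
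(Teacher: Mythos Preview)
Your proposal is correct and follows essentially the same approach as the paper: both arguments bootstrap the integrability of $u_k$ via the kernel representation \cref{eq:kernelrepr} and the sharp embedding \cref{thm:embedding:l2}, using the recursion $\tfrac{1}{s_{n+1}} = \tfrac{1}{s_n} + \tfrac{1}{\tilde q} - \tfrac{2}{Q+2}$ (the paper writes this as $\rho_l$ in place of your $s_n$), terminating after finitely many steps because $\tilde q > q_0$. The only cosmetic difference is that you dispose of the $F_k^i$ contribution once and for all in $\rmL^{2p_0}$, whereas the paper re-estimates it at each step in $\rmL^{\omega_l}$ with $\omega_l \le 2$; your handling of the overshoot (coming back down by H\"older on the finite-measure set $\calA_k$) is also slightly cleaner than the paper's device of adjusting $\tilde q$ to hit $2p_0$ exactly.
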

\begin{proof}
    Recall from \cref{eq:data2} that $d\in\rmL^{\tilde{q}}(\Omega_T)$ for some $\tilde{q}>q_0$. 
    We define $\rho_0 = 2$, 
    and for $l=1,2,\dots,\tau$, where $\tau$ is a number to be determined, 
    \begin{equation}
        \rho_l = \frac{\rho_{l-1}\tilde{q}(Q+2)}{(\rho_{l-1}+\tilde{q})(Q+2)- 2\rho_{l-1}\tilde{q}}.
    \end{equation}
    It's easy to verify that $\rho_l$ is increasing, hence $\rho_l \geq  2$ for all $l=1,\dots,\tau$, and 
    \begin{equation*}
        \frac{\rho_l}{\rho_{l-1}} \geq \frac{1}{2\left(\frac{1}{\tilde{q}}- \frac{1}{q_0}\right)+1}.
    \end{equation*}
    Therefore, there exists a number $\tau$ such that $\rho_\tau\geq 2p_0$ and $\rho_{\tau-1}<2p_0$, and W.L.O.G.~we can assume $\rho_{\tau}=2p_0$ (by adjusting $\tilde{q}$).  

    We now show that we can bootstrap to achieve integrability up to $\rho_\tau$.
    Recall \cref{eq:kernelrepr}, it follows from \cref{thm:embedding:l2} that 
    \begin{equation}\label{eq:it2:embedding:1}
        \norm{u_k}_{\rho_l,\R^{N+1}} 
        \leq \varsigma_1 \norm{(1-a^{ij})D_j u_k}_{\omega_l, \calA_k} + \varsigma_0(\norm{c^i D_i u_k}_{\rho_l^*,\calA_k} + \norm{du}_{\rho_l^*, \calA_k} + \norm{g}_{\rho_l^*,\calA_k}),
    \end{equation}
    where $\omega_l, \rho_l^*>1$ such that
    \begin{equation*}
        \frac{1}{\rho_l} = \frac{1}{\rho_l^*} -\frac{2}{Q+2}, \textrm{ and } \frac{1}{\rho_l} = \frac{1}{\omega_l} - \frac{1}{Q+2}.
    \end{equation*}
    Noting $2<\rho_l< 2p_0$ for all $1\leq l \leq \tau-1$, it follows that 
    \begin{equation*}
        \omega_l \leq 2, \textrm{ and } \rho_l^* \leq q_1:=2\frac{Q+2}{Q+4}<2. 
    \end{equation*}
    Hence, using the monotonicity of $\rmL^p$ norm on bounded domains and \cref{eq:it2:embedding:1} yields, for a constant $C$ depends on $\varsigma_0$, $\varsigma_1$ and $\abs{\Omega_T}$ that  
    \begin{equation*}
        \norm{u_k}_{\rho_l,\calA_k} 
        \leq C (\norm{(1-a^{ij})D_j u_k}_{2, \calA_k} + \norm{c^i D_i u_k}_{q_1,\calA_k} + \norm{du}_{\rho_l^*, \calA_k} + \norm{g}_{2,\calA_k}).
    \end{equation*}
    Now, it is easy to estimate, { again using H\"older's inequality with carefully chosen exponents},
    \begin{align*}
        &\norm{c^i D_i u_k}_{q_1,\calA_k} \leq \norm{c^i}_{Q+2, \calA_k} \norm{D_i u_k}_{2,\calA_k},\\
        &\norm{du}_{\rho_l^*,\calA_k} \leq \norm{d}_{\tilde{q}, \calA_k} \norm{u}_{\rho_{l-1},\calA_k} \leq \norm{d}_{\tilde{q}, \calA_k} (\norm{u_k}_{\rho_{l-1},\calA_k} + k\abs{\calA_k}^{\frac{1}{\rho_{l-1}}}).
    \end{align*}
    Combining \cref{assump:elliptic} and the above, we get, 
    \begin{equation*}
        \norm{u_k}_{\rho_l,\calA_k} 
        \leq C (\norm{D_i u_k}_{2, \calA_k} + \norm{u_k}_{\rho_{l-1}, \calA_k} + k\abs{\calA_k}^{\frac{1}{\rho_{l-1}}} + \norm{g}_{2,\calA_k}^2),
    \end{equation*}
    where $C$ now additionally depends on $\Lambda$, $\norm{c^i}_{Q+2,\Omega_T}$ and $\norm{d}_{\tilde{q},\Omega_T}$.

    Finally, when $l=0$, $\norm{u_k}_{2,\calA_k}$ is finite, and then we can bootstrap on $\norm{u_k}_{\rho_l, \calA_k}$ to conclude $\norm{u_k}_{2p_0, \calA_k}<\infty$. 
\end{proof}
\begin{remark}
    This is not yet a Moser iteration since we cannot continue the bootstrapping for $\rho_l > 2p_0$. To proceed with the Moser iteration, we need to return to the renormalization formula \cref{eq:renormal} and choose $\Phi$ such that it approximates $\abs{\cdot}^q$, see e.g.~\cite{GM22}. 
\end{remark}
The following Caccioppoli estimate {  is} obtained from the renormalization {  \cref{thm:renormal}}, together with the truncations \cref{eq:phi,eq:psi}. 
\begin{lemma}
    Let \cref{eq:data2} hold, and let $0<t_1\leq T$ be any time. 
    If $u$ is a subsolution to \cref{eq:kol}, and $u_k$ is defined as in \cref{eq:uk}, then 
    \begin{equation}\label{eq:it2:caccioppoli}
        \int_{\calA_k} \abs{\nabla_{m_0} u_k}^2 \,\rmd z \lesssim_{\lambda} \int_{\calA_k} \abs{c^i  u_k}^2 + \abs{d u u_k} + \abs{g u_k}\,\rmd z.
    \end{equation}
\end{lemma}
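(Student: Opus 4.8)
The plan is to derive the Caccioppoli inequality \eqref{eq:it2:caccioppoli} from the renormalization formula \cref{thm:renormal} applied to the truncation pair $(\Phi_{k,l},\Psi_{k,l})$ introduced in \cref{eq:phi,eq:psi}, exactly as in the proof of \cref{thm:it1:caccioppoli} but in the simpler divergence-free setting $b^i=f^i=0$. First I would take $\Phi=\Phi_{k,l}$ and $v\equiv1$ in \cref{eq:renormal}. Since $k\geq M$ the boundary integral over $\Gamma_{\mathrm K}$ vanishes, and since $\Phi_{k,l}(u(\cdot,0))=0$ on $\Omega$ the time-slice term contributes only $-\int_\Omega\Phi_{k,l}(u)(\cdot,t_1)\,\rmd x\leq 0$; dropping this nonpositive term leaves
\begin{equation*}
    \iint_{\Omega_{t_1}} a^{ij}D_ju_kD_iu_k\,\Psi_{k,l}'(u)\,\rmd z
    \leq \iint_{\Omega_{t_1}} c^i\Psi_{k,l}(u)D_iu_k + du\,\Psi_{k,l}(u) + g\,\Psi_{k,l}(u)\,\rmd z,
\end{equation*}
where I use that $D_j\Phi_{k,l}(u)=\Psi_{k,l}(u)D_ju=\Psi_{k,l}(u)D_ju_k$ on $\calA_k$ and $\Psi_{k,l}''=\Psi_{k,l}'$ appears as the weight multiplying the good term.

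Next I would use \cref{assump:elliptic} to bound the left side below by $\lambda\iint\abs{\nabla_{m_0}u_k}^2\Psi_{k,l}'(u)$, and estimate the three right-hand terms using Young's inequality with the ellipticity constant: for the first term write $\abs{c^i\Psi_{k,l}(u)D_iu_k}\leq\frac1\lambda\abs{c^i\Psi_{k,l}(u)}^2+\frac{\lambda}{4}\abs{D_iu_k}^2\Psi_{k,l}'(u)$ — here one must be slightly careful because the $D_iu_k$ term needs the weight $\Psi_{k,l}'(u)$ to be absorbed, which is legitimate since $\Psi_{k,l}'\in\{0,2\}$ so $\abs{D_iu_k}^2\leq\frac12\abs{D_iu_k}^2\Psi_{k,l}'(u)$ on the support. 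The remaining terms $du\Psi_{k,l}(u)$ and $g\Psi_{k,l}(u)$ carry no gradient, so they are estimated directly; in particular $u\Psi_{k,l}(u)\leq 2\Phi_{k,l}(u)+k\Psi_{k,l}(u)$ by \cref{eq:upsi}. After absorbing the $\frac\lambda4\abs{D_iu_k}^2\Psi_{k,l}'(u)$ contributions into the left side one obtains a bound of the form
\begin{equation*}
    \iint_{\calA_k}\abs{\nabla_{m_0}u_k}^2\Psi_{k,l}'(u)\,\rmd z
    \lesssim_\lambda \iint_{\calA_k}\abs{c^i\Psi_{k,l}(u)}^2 + \abs{d}(2\Phi_{k,l}(u)+k\Psi_{k,l}(u)) + \abs{g\Psi_{k,l}(u)}\,\rmd z.
\end{equation*}

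Finally I would pass to the limit $l\to\infty$. By \cref{thm:embedding:data2} (or, if one wants to avoid circularity, by the monotone convergence theorem combined with the fact that $\norm{u_k}_{2,\calA_k}<\infty$ which already controls all the right-hand terms since $d,g\in\rmL^2$ on the bounded domain), the right-hand side stays bounded; $\Psi_{k,l}(u)\to 2u_k$, $\Phi_{k,l}(u)\to u_k^2$, and $\Psi_{k,l}'(u)\uparrow 2\chi_{\{u>k\}}$ pointwise on $\calA_k$, so the left side converges to $2\iint_{\calA_k}\abs{\nabla_{m_0}u_k}^2\,\rmd z$ by monotone convergence. This yields \eqref{eq:it2:caccioppoli} after relabelling constants and noting $\Phi_{k,l}(u)+k\Psi_{k,l}(u)\to u_k^2+2ku_k$ which is dominated by $u\,u_k=(u_k+k)u_k$ up to a constant, matching the $\abs{du\,u_k}$ term in the statement. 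The only genuine subtlety — the ``main obstacle'' — is justifying the absorption and the limit without already knowing $u_k\in\rmL^{2p_0}$; this is handled because the gradient term $\nabla_{m_0}u_k\in\rmL^2$ is guaranteed by $u\in\rmV^0_{\mathrm{kin}}(\Omega_T)$ and the truncation cutoff keeps every other term at the $\rmL^2$ level, so the estimate is self-contained and \cref{thm:embedding:data2} is not actually needed for this lemma.
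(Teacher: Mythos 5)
Your overall route matches the paper's (apply \cref{thm:renormal} with $\Phi=\Phi_{k,l}$, $v\equiv 1$, use ellipticity and Young's inequality, then let $l\to\infty$ and absorb, exactly as in the proof of \cref{thm:it1:caccioppoli}), but your absorption step contains a concrete error. You claim the weighted Young bound
\begin{equation*}
\abs{c^i\Psi_{k,l}(u)D_iu_k}\leq\tfrac1\lambda\abs{c^i\Psi_{k,l}(u)}^2+\tfrac{\lambda}{4}\abs{D_iu_k}^2\Psi_{k,l}'(u)
\end{equation*}
holds ``because $\abs{D_iu_k}^2\leq\tfrac12\abs{D_iu_k}^2\Psi_{k,l}'(u)$ on the support.'' This is false: the support of $\Psi_{k,l}(u)D_iu_k$ is $\{u>k\}$, but on $\{u\geq l\}\subset\{u>k\}$ one has $\Psi_{k,l}(u)=2(l-k)>0$ while $\Psi_{k,l}'(u)=0$, so the inserted weight vanishes precisely where the integrand does not. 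Consequently absorption at fixed $l$ cannot work — the left side carries the weight $\Psi_{k,l}'(u)$ (which is $0$ on $\{u\geq l\}$), whereas the Young term on the right does not. The correct order, which is what the paper does at the end of the proof of \cref{thm:it1:caccioppoli}, is: apply the \emph{unweighted} Young inequality (yielding $\tfrac\lambda4\abs{D_iu_k}^2$ with no weight), first let $l\to\infty$ so that $\Psi_{k,l}'(u)\uparrow 2\chi_{\{u>k\}}$ (monotone convergence, valid since the left side is bounded by $2\norm{\nabla_{m_0}u_k}_{2,\calA_k}^2<\infty$), and only then absorb $\tfrac\lambda4\iint\abs{\nabla_{m_0}u_k}^2$ into the now constant-weighted left side.

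Your closing claim that \cref{thm:embedding:data2} is ``not actually needed'' because ``$\norm{u_k}_{2,\calA_k}<\infty$ and $d,g\in\rmL^2$ control all the right-hand terms'' is also wrong as stated: $\iint\abs{d}u_k^2$ is not controlled by $\norm{d}_{2}\norm{u_k}_{2}^2$, and likewise $\iint\abs{c^iu_k}^2$ is not controlled by $\norm{c^i}_{Q+2}$ and $\norm{u_k}_2$ alone; both require $u_k\in\rmL^{p}$ for some $p>2$. Under \cref{eq:data2} this is exactly what \cref{thm:embedding:data2} supplies, and there is no circularity — its proof rests on \cref{eq:kernelrepr}, \cref{thm:embedding:l2}, and a bootstrap, not on this lemma. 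It is true that the bare inequality \eqref{eq:it2:caccioppoli} holds regardless because its left side is finite and the limit on the right is monotone, but the paper invokes the higher integrability precisely so that the right-hand side is finite and usable in the ensuing iteration; your stated $\rmL^2$-only justification would not deliver that.
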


Next, we derive an inequality of the form \cref{eq:it1:iteration} { which is the key iteration inequality}. 
Recall \cref{eq:kernelrepr}, 
using \cref{thm:embedding:l2} we obtain 
\begin{equation*}
    \norm{u_k}^2_{2p_0,\calA_k} \leq \varsigma_1 \norm{(1-a^{ij})D_j u_k}_{2,\calA_k}^2 
    + \varsigma_0(\norm{c^i D_i u_k}_{q_1,\calA_k}^2  
    +  \norm{d u }_{q_1,\calA_k}^2 + \norm{g}_{q_1,\calA_k}^2),
\end{equation*}
where $q_1 = 2(Q+2)/(Q+4)$. 
{  In the following, the calculations are carried out using H\"older's and Young's with carefully chosen exponents.} 
Note that 
\begin{equation*}
    \norm{c^i D_i u_k}_{q_1,\calA_k}^2 \leq \norm{c^i}^2_{Q+2, \calA_k} \norm{D_i u_k}^2_{2,\calA_k}.
\end{equation*}
Combining \cref{eq:it2:caccioppoli} and the above it follows that 
\begin{equation*}
    \norm{u_k}^2_{2p_0,\calA_k} \leq \gamma_1 (\norm{c^i u_k}_{2,\calA}^2 + \norm{duu_k}_{1,\calA_k} + \norm{gu_k}_{1,\calA_k}
    + \norm{d u }_{q_1,\calA_k}^2 + \norm{g}_{q_1,\calA_k}^2),
\end{equation*}
where $\gamma_1 = \gamma_1 (\Lambda, \norm{c^i}_{Q+2,\Omega_T}^2, \varsigma_0,\varsigma_1)$.

Next, we estimate 
\begin{equation*}
    \norm{c^i u_k}_{2,\calA_k}^2 \leq \norm{c^i}_{Q+2, \calA_k}^2 \norm{u_k}_{2p_0, \calA_k}^2;
\end{equation*}
For some $q'<p_0$ we compute 
\begin{align*}
    &\norm{du}_{q_1,\calA_k}^2 \leq \norm{d u_k}_{q_1,\calA_k}^2 + \norm{d k}_{q_1,\calA_k}^2\leq \norm{d}_{\frac{Q+2}{2},\calA_k}^2 \norm{u_k}_{2p_0, \calA_k}^2 + k^2 \norm{d}_{\frac{2q' q_1}{2q'-q_1},\Omega_{t_1}}^2 \abs{\calA_k}^{\frac{1}{q'}},\\
    &\norm{duu_k}_{1,\calA_k} \leq 2\norm{d u_k^2}_{1,\calA_k} + \norm{d k^2}_{1,\calA_k} \leq 2\norm{d}_{\frac{Q+2}{2},\calA_k} \norm{u_k}_{2p_0,\calA_k}^2 + k^2\norm{d}_{\frac{q'}{q'-1},\Omega_{t_1}}\abs{\calA_k}^{\frac{1}{q'}};
\end{align*}
We continue to estimate the terms containing $g$:
\begin{align*}
    &\norm{g}_{q_1,\calA_k}^2 \leq \norm{g}_{\frac{2q'q_1}{2q'-q_1},\Omega_{t_1}}^2 \abs{\calA_k}^{\frac{1}{q'}},\\
    &\norm{g u_k}_{1,\calA_k} \leq \norm{g}_{q'',\Omega_{t_1}}^2\abs{\calA_k}^{\frac{1}{q'}} + \frac{1}{2}\norm{u_k}_{2p_0,\calA_k}^2,
\end{align*}
where $q''>1$ satisfying $\frac{1}{q''} + \frac{1}{2p_0} + \frac{1}{2q'}=1$.
All in all, for $t_1$ sufficiently small (depending only on \cref{eq:data2}) we have that 
\begin{equation}\label{eq:it2:iteration}
    \norm{u_k}_{2p_0,\calA_k}^2 \leq \gamma k^2 \abs{\calA_k}^{\frac{1}{q'}}
\end{equation}
where $\gamma=\gamma(\lambda,\Lambda,\ref{eq:data2})$. 
The De Giorgi iteration can be carried through by following the proof strategy for \cref{eq:data1} {  in \cref{sec:proof1,sec:proof2}, and \cref{eq:linfl2,eq:linfdata} in \cref{thm:1} under \cref{eq:data1} follows easily.}

\section{Proof of Theorem 2 and some remarks}\label{sec:remarks}
\begin{delayedproof}{thm:2}
    By the assumption in \cref{thm:2}, the renormalization formula \cref{eq:renormal} can be rewritten as 
    \begin{multline*}
        \iint_{\Omega_T} \Big(-\Phi(u) v_t + \Phi(u) \pair*{\bfB x, Dv} 
        +   a^{ij}D_j \Phi(u) D_i v 
        - (b^i + c^i) (D_i \Phi(u))v\\ 
        +  a^{ij} D_j u D_i u \Phi''(u) v \Big)\,\mathrm{d}x\mathrm{d}t
        \leq  \iint_{\Omega_T} d(u\Phi'(u) v) - b^i D_i (u \Phi'(u) v)\,\rmd x\rmd t \\
        +\int_{ \Gamma_{\mathrm{K}}} \langle \bfB x, \mathbf{n}_x\rangle \Phi(u_\Gamma) v\,\mathrm{d}S\mathrm{d}t
        + \left[\int_{\Omega} \Phi(u) v\,\mathrm{d}x\right]^0_T ,
    \end{multline*}
    Formally, by choosing $\Phi(r)=(r-k)_+$ where $k\geq M$ (this can be made rigorous by approximating by $\mathrm{C}^{1,1}(\R)$ functions), we have $u\Phi'(u) v\geq 0$. It follows that  
    \begin{equation*}
        \int_{\Omega_T} -u_k v_t + u_k \pair*{\bfB x, Dv} 
        +   a^{ij}D_j u_k D_i v 
        - (b^i + c^i) (D_i u_k)v\,\rmd z \leq 0,
    \end{equation*}
    where $u_k:= (u-k)_+$.
    By inspecting the proof of \cref{thm:1}, we see by the same procedure that the right-hand side of \cref{eq:it2:iteration} will be zero, which concludes the proof.
\end{delayedproof}

We remark that the Moser iteration can also be carried out by carefully adjusting the renormalization function $\Phi$ to approximate $\abs{\cdot}^q$, as done for kinetic Fokker-Planck equations in \cite[Lemma 10]{GM22}.
Moreover, the local boundedness result can also be established by testing with suitable cutoff functions, as discussed in \cite[Prop.~11]{GM22}.

Recall \cref{assump:B}. Indeed, this assumption is rather strong, since the sufficient condition for Kolmogorov equations of the form \cref{eq:kol} to be hypoelliptic is to assume
\begin{equation*}
    \bfB = 
    \begin{pmatrix}
        * &* & \cdots & * & * \\
        \bfB_1 & * & \cdots & * & * \\
        \mathbf{O} & \bfB_2 & \cdots & * & * \\
        \vdots & \vdots & \ddots & \vdots & \vdots \\
        \mathbf{O} & \mathbf{O} & \cdots & \bfB_{\kappa} & *
    \end{pmatrix}
\end{equation*}
where $*$ denotes arbitrary elements. 
In this case, the function space is different and the structure of the fundamental solution to \cref{eq:principal} is more complicated. 
One of the main difficulties is that Young's inequality, \cite[Lemma 1.4]{BCD11}, is unknown because the Lie group structure is no longer homogeneous, so \cite[Prop. 1.3.21]{BLU07} no longer applies. {  The same difficulty also appears for the stationary equation $-\pair{\bfB x, Du} = \L u + g + D_i f^i$.}
It would be an interesting question whether the techniques in this paper can be extended to this generalized setting.

Furthermore, it is an intriguing question whether we can replace $\L$ in \cref{eq:kol} with nonlinear operators, such as the $p$-Laplace, and whether the theory continues to hold as demonstrated in \cite{DiB93}.

\tocless{\section*{Acknowledgement}}
M.~Hou was supported by [Swedish Research Council dnr: 2019-04098].
The author wishes to thank Benny Avelin for fruitful discussions and revising the paper.

\printbibliography

\end{document}